\date{November 06, 2010}
\title[$A_2$-singularities of hypersurfaces]{
  $A_2$-singularities of hypersurfaces
  with non-negative sectional curvature \\
  in Euclidean space}
\author{Kentaro Saji}
\address[Saji]{%
  Department of Mathematics,
  Faculty of Education,
  Gifu University,  Yanagido 1-1, Gifu 501-1193, Japan
}
\email{ksaji@gifu-u.ac.jp}
\author{Masaaki Umehara}
\address[Umehara]{%
   Department of Mathematics, Graduate School of Science,
   Osaka University,
   Toyonaka, Osaka 560-0043,
   Japan
}
\email{umehara@math.sci.osaka-u.ac.jp}
\author{Kotaro Yamada}
\address[Yamada]{%
   Department of Mathematics,
   Tokyo Institute of Technology,
   O-okayama, Meguro, Tokyo 152-8551, Japan%
}
\email{kotaro@math.titech.ac.jp}
\subjclass[2000]{%
 Primary 57R45;   % Singularities of differential maps
 Secondary 53A05. % Surfaces of Euclidean space
}
\thanks{%
 K. Saji, M. Umehara and K. Yamada were partially 
 supported by Grant-in-Aid for Scientific Research 
 (Young Scientists (B)) No. 20740028,
 (A) No.22244006 and (B) No. 21340016,
 respectively from the Japan Society for the Promotion of Science.
}
\keywords{wave front, singular curvature, realization}
\renewcommand{\keywords}[1]{\ignorespaces\relax\ignorespaces}
\newcommand{\subclass}[1]{\ignorespaces\relax\ignorespaces}
\theoremstyle{plain}
 \newtheorem{theorem}{Theorem}[section]
 \newtheorem{proposition}[theorem]{Proposition}
 \newtheorem{fact}[theorem]{Fact}
 \newtheorem{lemma}[theorem]{Lemma}
 \newtheorem{corollary}[theorem]{Corollary}
\theoremstyle{definition}
 \newtheorem{definition}[theorem]{Definition}
\theoremstyle{remark}
 \newtheorem{remark}[theorem]{Remark}
 \newtheorem*{remark*}{Remark}
 \newtheorem{example}[theorem]{Example}
\numberwithin{equation}{section}
\newenvironment{enum}{%
  \begin{enumerate}\setlength{\itemindent}{1em}%
                   \setlength{\leftmargin}{20em}%
}{\end{enumerate}}
\newcommand{\vect}[1]{\boldsymbol{#1}}
\newcommand{\R}{\boldsymbol{R}}
\newcommand{\C}{\boldsymbol{C}}
\renewcommand{\phi}{\varphi}
\newcommand{\sgn}{\operatorname{sgn}}
\newcommand{\ext}{\operatorname{ext}}
\newcommand{\inner}[2]{\left\langle{#1},{#2}\right\rangle}
\newcommand{\Ker}{\operatorname{Ker}}
\newcommand{\SO}{\operatorname{SO}}
\renewcommand{\O}{\operatorname{O}}
\newcommand{\so}{\operatorname{\mathfrak{so}}}
\newcommand{\E}{\mathcal{E}}
\newcommand{\F}{\mathcal{F}}
\newcommand{\pmt}[1]{{\begin{pmatrix} #1  \end{pmatrix}}}
\newcommand{\trans}[1]{\vphantom{#1}^t\!#1}
\newcommand{\first}{\operatorname{\mathit{I}}}
\newcommand{\second}{\operatorname{\mathit{I\!I}}}
\newcommand{\third}{\operatorname{\mathit{I\!I\!I}}}
\begin{document}
\maketitle
\begin{abstract}
In a previous work, the authors
gave a definition of `front bundles'.
Using this, we give a realization 
theorem for wave fronts in 
space forms, like as in the fundamental 
theorem of surface theory.
As an application, we investigate the behavior
of principal singular curvatures
along  $A_2$-singularities of hypersurfaces
with non-negative sectional curvature in 
Euclidean space.
\end{abstract}

%%%%%%%%%%%%%%%%%%%%%%%%%%%%%%%%%%%%%%%%%%%%%%%%
\setcounter{section}{-1}
\section{Introduction}
It is known that two Gauss-Bonnet formulas 
hold for compact orientable fronts (wave fronts) 
in $\R^3$ 
(see \cite{K1}, \cite{SUY1} and \cite{SUY_JGEA}). 
From this, it is expected that there is an intrinsic formulation of
wave fronts, as well as of their realization problem, like as in the
fundamental theorem of surface theory.

In this paper, we recall the definitions of
{\it coherent tangent bundles\/} and 
{\it front bundles\/} given in \cite{SUY_JGEA}, 
which is an intrinsic formulation for wave fronts,
and give a necessary and sufficient condition for
a given front bundle to  be realized as a wave front
in a space form (cf.\ Theorem~\ref{thm:fundamental}). 
As an application, we also give a necessary 
and sufficient condition
for a given coherent tangent bundle over a
manifold to be realized as a smooth map into a
same dimensional space form
(cf.\ Theorem~\ref{thm:morin}).

Moreover, using this new framework, we show the
following assertion, which is a generalization
of \cite[Theorem 3.1]{SUY1} for $2$-dimensional fronts.

\begin{theorem}
 Let $M^m$ be an $m$-manifold and
 $f:M^m\to \R^{m+1}$ a wave front
 with the singular set $\Sigma_f$. 
 Take an open subset $U(\subset M^m)$
 such that $U\cap \Sigma_f$ consists only of
 $A_2$-singular points.
 Then the following hold{\rm:}
\begin{enumerate}
 \item\label{item:main:1} 
      If the sectional curvature $K$ of
      the induced metric is bounded
      on $U\setminus \Sigma_f$, then the second fundamental
      form of $f$ vanishes along $\Sigma_f\cap U$.
 \item\label{item:main:2}
      If $K$ is non-negative
      on $U\setminus \Sigma_f$, then it is bounded 
      and the singular principal curvatures of $f$
      {\rm(}cf.\ Definition \ref{def:singular-curvature}{\rm)}
      along $U\cap \Sigma_f$ are all non-positive.
\end{enumerate}
\end{theorem}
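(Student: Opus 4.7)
The plan is to set up adapted coordinates near an $A_2$-singular point, extract leading Taylor expansions of the first and second fundamental forms, and then apply the Gauss equation to $2$-planes containing the null direction of $df$. The $1/u_1$-type singularity in the resulting formula for $K$ will force the vanishing of the second fundamental form, while the constant term will encode the singular principal curvatures.

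Pick $p \in U \cap \Sigma_f$ and choose local coordinates $(u_1, \dots, u_m)$ centered at $p$ with $\Sigma_f \cap U = \{u_1 = 0\}$ and $\partial_1$ spanning $\Ker df$ along $\Sigma_f$. Since $f_1 \equiv 0$ on $\Sigma_f$, one has $f_{1j}|_{\Sigma_f} = 0$ for $j \geq 2$, and differentiating the Legendre condition $\inner{\nu}{f_1} = 0$ along $\partial_1$ gives $\inner{f_{11}}{\nu}|_{\Sigma_f} = 0$. Hence the components $h_{ij} := \inner{f_{ij}}{\nu}$ of the second fundamental form satisfy $h_{1k}|_{\Sigma_f} = 0$ for every $k$. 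A direct Taylor expansion in $u_1$ yields
\begin{align*}
 g_{11} &= u_1^2 |f_{11}|^2 + O(u_1^3), & g_{1j} &= u_1 \inner{f_{11}}{f_j} + O(u_1^2), \\
 h_{11} &= \tfrac{1}{2} u_1 \psi + O(u_1^2), & h_{1j} &= u_1 \tau_j + O(u_1^2),
\end{align*}
where $\psi := \inner{f_{111}}{\nu}|_{\Sigma_f}$, $\tau_j := \inner{f_{11j}}{\nu}|_{\Sigma_f}$, and the leading coefficients on the right are understood at $u_1 = 0$. The $A_2$-non-degeneracy ensures $\psi \neq 0$ and that $f_{11}, f_2, \dots, f_m$ are linearly independent on $\Sigma_f$, so $|f_{11}|^2 g_{ii} - \inner{f_{11}}{f_i}^2 > 0$ there.

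For the $2$-plane $\sigma_{1i} := \operatorname{span}(\partial_1, \partial_i)$ with $i \geq 2$, the Gauss equation gives
\[
 K(\sigma_{1i}) = \frac{h_{11} h_{ii} - h_{1i}^2}{g_{11} g_{ii} - g_{1i}^2}
 = \frac{\tfrac{1}{2} u_1 \psi\, h_{ii}|_{\Sigma_f} + O(u_1^2)}{u_1^2 (|f_{11}|^2 g_{ii} - \inner{f_{11}}{f_i}^2)|_{\Sigma_f} + O(u_1^3)}.
\]
The leading term on the right is of order $1/u_1$; boundedness of $K$ near $\Sigma_f$, and also non-negativity of $K$ on both sides of $\Sigma_f$, force this term to vanish, and since $\psi \neq 0$ this yields $h_{ii}|_{\Sigma_f} = 0$ for every $i \geq 2$. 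Running the same argument with $2$-planes $\operatorname{span}(\partial_1,\, t\partial_i + s\partial_j)$ produces $h_{ij}|_{\Sigma_f} = 0$ for all $i, j \geq 2$. Thus the entire second fundamental form vanishes on $\Sigma_f \cap U$, proving (1) and also the boundedness part of (2).

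For the remaining claim in (2), write $h_{ii}(u_1, u') = u_1 \mu_i(u') + O(u_1^2)$ now that $h_{ii}|_{\Sigma_f} = 0$. Reinserting into the Gauss equation produces the finite limit
\[
 \lim_{u_1 \to 0} K(\sigma_{1i}) = \frac{\tfrac{1}{2} \psi \mu_i - \tau_i^2}{|f_{11}|^2 g_{ii} - \inner{f_{11}}{f_i}^2},
\]
all quantities evaluated on $\Sigma_f$, and $K \geq 0$ forces $\psi \mu_i \geq 2 \tau_i^2 \geq 0$. To conclude, one diagonalizes the symmetric form $(\mu_{ij})$ on $T\Sigma_f$ with respect to the induced metric and identifies $\mu_i/\psi$ (up to positive factors) with $-\kappa_{s,i}$, the singular principal curvatures of Definition~\ref{def:singular-curvature}; the inequality $\psi \mu_i \geq 0$ then becomes $\kappa_{s,i} \leq 0$. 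The main obstacle is precisely this last identification: one has to unpack the intrinsic, front-bundle definition of the singular principal curvatures and verify that it matches the extrinsic coefficients $\mu_i/\psi$ read off from the normal form, including the sign convention that makes $\kappa_{s,i} \leq 0$ the natural conclusion.
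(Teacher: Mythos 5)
Your coordinate setup and the Taylor expansions are sound, and for part (1) together with the boundedness claim in (2) your argument is essentially the paper's proof of Theorem~\ref{thm:bdd}~\ref{item:bdd:1} and \ref{item:bdd:2}, written extrinsically (and with the roles of the first and last coordinates exchanged): the numerator of the Gauss-equation quotient vanishes to first order along $\Sigma_f$ while the denominator vanishes to second order with positive leading coefficient $|f_{11}\wedge f_i|^2$, the coefficient $\psi$ is nonzero by non-degeneracy combined with the front condition (this is exactly the content of Corollary~\ref{cor:coord2}), so either boundedness or one-signedness of $K$ forces $h_{ii}|_{\Sigma_f}=0$, and polarization finishes. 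That part is complete and correct.

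The gap is exactly where you put it, and it is not a verification you can defer: the identification of $\mu_i/\psi$ with $-\kappa_{s,i}$ up to positive factors \emph{is} the remaining half of statement (2), and the paper spends the entire proof of Theorem~\ref{thm:bdd}~\ref{item:bdd:3} on it. Concretely, three things must be supplied. First, the conormal $\vect n$ in \eqref{eq:sing-shape} has to be located in your data: it is proportional to $\nu_1$ (equivalently to the part of $f_{11}$ orthogonal to $f_2,\dots,f_m$; the paper imposes the extra normalization \ref{item:coord:4} of Lemma~\ref{lem:coord} precisely so that the second derivative of $f$ in the null direction becomes parallel to $\vect n$). Second, $\mu_i=\partial_1 h_{ii}|_{\Sigma_f}$ must be converted into $\inner{f_{ii}}{\nu_1}$, i.e.\ into a quantity involving $D_X\vect n$; intrinsically this is the Codazzi manipulation \eqref{eq:h11m}. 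Third, and most delicately, the factor $\sgn\bigl(d\lambda_\phi(\eta)\bigr)$ in the definition of $S_\phi$ must be matched against the sign of $\inner{f_{11}}{\vect n}$; this is the step \eqref{eq:k-pos2} and the final sign computation for $\kappa_\phi(\partial_1)$, and it is what turns your (correct) inequality $\psi\mu_i\ge 0$ into $\kappa_{s,i}\le 0$ rather than $\kappa_{s,i}\ge 0$. Without this last bookkeeping the conclusion could come out with the wrong sign, so as written the proposal proves (1) and the boundedness in (2), but not the non-positivity of the singular principal curvatures.
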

The first assertion of \cite[Theorem 5.1]{SUY1}
is the same statement as \ref{item:main:1}.
This theorem follows from the corresponding intrinsic version of
the statements given in Theorem \ref{thm:bdd}, 
which enable us to prove the similar assertions 
for wave fronts in the space form of constant curvature $c$
by a suitable modification.
As a direct consequence of the theorem, 
we get the following assertion, which is the second
assertion of \cite[Theorem 5.1]{SUY1}.
\begin{corollary}
 \label{cor:hyper}
 Let $f\colon{}U\to \R^{m+1}$ $(m\ge 3)$ be a front whose 
 singular points are all $A_2$ points. 
 If the sectional curvature $K$ 
 is positive everywhere on the set of
 regular set points,
 the sectional curvature of the singular submanifold is non-negative.
 Furthermore, if $K\geq \delta(>0)$, then
 the sectional curvature of the singular submanifold is positive.
\end{corollary}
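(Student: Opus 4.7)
The plan is to reduce the statement to the preceding Theorem via the Gauss equation for $f(\Sigma_f\cap U)$, viewed as an immersed codimension-two submanifold of $\R^{m+1}$. Since $K>0$ on regular points implies $K\ge 0$, part \ref{item:main:2} of the Theorem provides that $K$ is in fact bounded on $U\setminus\Sigma_f$ and that each singular principal curvature of $f$ along $U\cap\Sigma_f$ is non-positive; the boundedness of $K$ then triggers part \ref{item:main:1}, so the second fundamental form of $f$ vanishes identically along $\Sigma_f\cap U$.

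Because every singular point is an $A_2$ point, $\Sigma_f\cap U$ is a smooth $(m-1)$-submanifold on which $f$ is an immersion, and the normal bundle of $f(\Sigma_f\cap U)$ in $\R^{m+1}$ is spanned pointwise by two orthonormal vectors: the unit normal $\nu$ of the front (which extends smoothly across $\Sigma_f$) and a unit vector $\xi$ obtained as the limit of the normalized null direction of $df$ from the regular side. The relation $\langle df(\cdot),\nu\rangle\equiv 0$ forces $\xi\perp\nu$ automatically, and the $A_2$ condition guarantees that $\xi$ is transverse to $Tf(\Sigma_f)$. The second fundamental form $II^{\Sigma}$ of $f(\Sigma_f)$ decomposes as $II^{\Sigma}=II^{\Sigma}_{\nu}\,\nu+II^{\Sigma}_{\xi}\,\xi$. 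Tangential differentiation along $\Sigma_f$ shows that $II^{\Sigma}_{\nu}$ coincides with the restriction of the second fundamental form of $f$ to $T\Sigma_f$, hence $II^{\Sigma}_{\nu}\equiv 0$ by the previous paragraph. Unwinding Definition \ref{def:singular-curvature}, the eigenvalues of $II^{\Sigma}_{\xi}$ are (up to a universal sign) precisely the singular principal curvatures of $f$, so $II^{\Sigma}_{\xi}$ is negative semi-definite.

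The Gauss equation for a codimension-two submanifold of $\R^{m+1}$, applied to orthonormal tangent vectors $X,Y$ at a point of $f(\Sigma_f\cap U)$, now reads
\begin{equation*}
 K^{\Sigma}(X,Y)
  = II^{\Sigma}_{\xi}(X,X)\,II^{\Sigma}_{\xi}(Y,Y)-II^{\Sigma}_{\xi}(X,Y)^{2},
\end{equation*}
since the $\nu$-contribution vanishes. The right-hand side is the $2\times 2$ principal minor of the negative semi-definite symmetric form $II^{\Sigma}_{\xi}$ in the $(X,Y)$-basis and is therefore non-negative, proving the first assertion.

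For the second assertion one must upgrade ``non-positive'' to ``strictly negative'' for every singular principal curvature under the stronger hypothesis $K\ge\delta>0$; this would make $II^{\Sigma}_{\xi}$ negative definite, all of whose $2\times 2$ principal minors are strictly positive, yielding $K^{\Sigma}>0$. I expect this quantitative step to drop out of the intrinsic curvature-blow-up estimate behind part \ref{item:main:2}, which ought to compare $\liminf K$ from the regular side to a product of singular principal curvatures and hence exclude any vanishing of them when $K$ is uniformly bounded below. This strict-negativity upgrade is the main obstacle; the rest of the argument is a routine Gauss-equation computation carried out above.
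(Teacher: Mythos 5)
Your argument is essentially the paper's own proof, written out in full. The paper disposes of Corollary \ref{cor:hyper} in one sentence: since the second fundamental form of $f$ vanishes along $\Sigma_f$ (part \ref{item:main:1} of the Theorem, triggered by the boundedness supplied by part \ref{item:main:2}), and the conormal component of the second fundamental form of the codimension-two immersion $f|_{\Sigma_f}$ is (up to sign) the singular shape operator, the Gauss equation exhibits the sectional curvature of the singular submanifold in a plane spanned by two singular principal directions as the product of the corresponding singular principal curvatures. Your decomposition $II^{\Sigma}=II^{\Sigma}_{\nu}\nu+II^{\Sigma}_{\xi}\xi$ with $II^{\Sigma}_{\nu}\equiv 0$, together with the remark that semi-definiteness of $II^{\Sigma}_{\xi}$ makes every $2\times 2$ minor non-negative, is the same computation carried out carefully (and it covers arbitrary $2$-planes, not only principal ones, which the paper's phrasing glosses over); all of those steps check out, including the identification of $\xi$ with the conormal as the limit of the normalized image of the extended null field. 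The one step you leave open --- upgrading ``non-positive'' to ``strictly negative'' for the singular principal curvatures under $K\ge\delta>0$ --- does not need a new argument: it is exactly the second sentence of Theorem \ref{thm:bdd}\ \ref{item:bdd:3} (here $K^{\ext}=K$ since $c=0$, so $K\ge\delta$ on the regular set gives $K^{\ext}(X\wedge\tilde\eta)\ge\delta$ for an extended null field $\tilde\eta$), and its proof via \eqref{eq:limit-k-ext} is precisely the limit comparison you anticipate. Citing that clause closes your argument completely.
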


An example satisfying the condition in the theorem
and the corollary is given in \cite{SUY1}.
In this paper, we shall also give a new such example.

\section{Coherent tangent bundles}
\label{sec:coherent}
\subsection{Coherent tangent bundles and their singularities}
According to \cite{SUY_JGEA}, 
we recall a general setting for intrinsic fronts:
Let $M^m$ be an oriented $m$-manifold ($m\ge 1$).
A {\em coherent tangent bundle\/} over $M^m$ 
is a $5$-tuple $(M^m,\E,\inner{~}{~},D,\varphi)$, 
where 
\begin{enum}
 \item $\E$ is a vector bundle of rank $m$ over $M^m$ with
       an inner product $\inner{~}{~}$,
 \item $D$ is a metric connection on $(\E,\inner{~}{~})$,
 \item $\varphi\colon{}TM^m\to \E$ is a bundle homomorphism
       which satisfies
       \begin{equation}\label{eq:c}
             D^{}_{X}\phi(Y)-D^{}_{Y}\phi(X)-\phi([X,Y])=0
       \end{equation}
       for vector fields $X$ and $Y$ on $M^m$.
\end{enum}
In this setting, the pull-back of the metric 
\begin{equation}\label{eq:phi-metric}
   ds^2_\phi:=\phi^*\inner{~}{~}
\end{equation}
is called the {\em $\phi$-metric}, 
which is a positive semidefinite symmetric tensor on $M^m$.
A point $p\in M^m$ is called a {\it $\phi$-singular point\/} 
if $\phi_p\colon{}T_pM^m\to\E_p$ is not a bijection, where $\E_p$ is the
fiber of $\E$ at $p$, 
that is,  $ds^2_\phi$ is not positive definite at $p$.
We denote by $\Sigma_{\phi}$ the set of $\phi$-singular points on $M^m$.
On the other hand, a point $p\in M^m\setminus\Sigma_{\phi}$
is called a {\it $\phi$-regular point}.
By \eqref{eq:c}, the pull-back connection 
of $D$ by $\phi$ coincides with the Levi-Civita connection
with respect to $ds^2_\phi$ on the set of $\phi$-regular points.
Thus, one can recognize that the concept of coherent tangent bundles   
is a generalization of Riemannian manifolds.

A coherent tangent bundle $(M^m,\E,\inner{~}{~},D,\phi)$
is called {\it co-orientable\/} if the vector bundle $\E$ is orientable,
namely, there exists a smooth non-vanishing 
section $\mu$ of the determinant bundle of the dual bundle $\E^*$
such that 
\begin{equation}\label{eq:mu}
   \mu(\vect{e}_1,\dots,\vect{e}_m)=\pm 1
\end{equation} 
for any orthonormal frame
$\{\vect{e}_1,\dots,\vect{e}_m\}$ on $\E$.
The form $\mu$ is determined uniquely up to a $\pm$-ambiguity.
A {\em co-orientation\/} of the coherent tangent bundle $\E$ is a 
choice of $\mu$.
An orthonormal frame $\{\vect{e}_1,\dots,\vect{e}_m\}$ is called 
{\em positive\/} with respect to the co-orientation $\mu$ 
if $\mu(\vect{e}_1,\dots,\vect{e}_m)=+1$.

We give here typical examples of coherent tangent bundles:
\begin{example}[\cite{SUY_JGEA}]\label{ex:map}
 Let  $M^m$ be an oriented $m$-manifold
 and  $(N^m,g)$ an oriented Riemannian $m$-manifold.
 A $C^\infty$-map $f:M^m\to N^m$ induces a coherent tangent bundle
 over $M^m$  as follows: 
 Let $\E_f:=f^*TN^m$ be the pull-back of the tangent bundle $TN^m$
 by $f$.
 Then $g$ induces a positive definite metric $\inner{~}{~}$ on $\E_f$,
 and the restriction $D$ of the Levi-Civita connection of $g$
 gives a connection on $\E$ which is compatible with respect to
 the metric $\inner{~}{~}$.
 We set 
 $\phi^{}_f:=df:TM^m\to\E_f$,
 which gives the structure of the coherent tangent bundle on $M^m$.
 A necessary and sufficient condition for  a given coherent tangent
 bundle over an $m$-manifold  to be realized as
 a smooth map into an $m$-dimensional space form
 will be given in Theorem~\ref{thm:morin} in Section~\ref{sec:realization}.
\end{example}

\begin{example}[\cite{SUY_JGEA}]\label{ex:front}
 Let $(N^{m+1},g)$ be an $(m+1)$-dimensional Riemannian manifold.
 A $C^\infty$-map
 $f:M^m\to N^{m+1}$
 is called a {\it frontal\/} if for each $p\in M^m$,
 there exists a neighborhood $U$ of $p$ and a unit vector field $\nu$
 along $f$ defined on $U$ such that 
 $g\bigl(df(X),\nu\bigr)=0$ holds for any vector field $X$ on $U$
 (that is, $\nu$ is a unit normal vector field),
 and the map $\nu\colon{}U\to T_1N^{m+1}$ is a $C^{\infty}$-map,
 where $T_1N^{m+1}$ is the unit tangent bundle of $N^{m+1}$.
 Moreover, if $\nu$ can be taken to be an immersion for each $p\in M^m$,
 $f$ is called a {\em front} or a {\em wave front}.
 We remark that $f$ is a front if and only if
 $f$ has a lift
 $L_f:M^m\longrightarrow P\bigl(T^*N^{m+1}\bigr)$
 as a Legendrian immersion,
 where $P(T^*N^{m+1})$ is a projectified cotangent bundle on $N^{m+1}$
 with the canonical contact structure.
 The subbundle $\E_f$ which consists of the vectors in
 the pull-back bundle $f^*TN^{m+1}$  
 perpendicular to $\nu$ gives a coherent tangent bundle.
 In fact, 
 $\phi_f\colon{}TM^m\ni X \mapsto df(X)\in \E_f$
 gives a bundle homomorphism.
 Let $\nabla$ be the Levi-Civita connection on $N^{m+1}$.
 Then by taking the tangential part of $\nabla$, it induces 
 a connection $D$ on $\E_f$ satisfying 
 \eqref{eq:c}.
 Let $\inner{~}{~}$ be a metric on $\E_f$ induced from 
 the Riemannian metric on $N^{m+1}$. Then  $D$ is a metric connection
 on $\E_f$. 
 Thus we get a coherent tangent bundle 
 $(M^{m},\E_f,\inner{~}{~},D,\phi_f)$.
 Since the unit tangent bundle can be canonically identified with
 the unit cotangent bundle, 
 the map $\nu\colon{}U\to T_1N^{m+1}$ can be considered
 as a lift of $L_f|_U$.
 A frontal $f$ is called {\it co-orientable\/} if there is a
 unit normal vector field $\nu$ globally defined on $M^m$.
 When $N^{m+1}$ is orientable, the coherent tangent bundle is 
 co-orientable if and only if so is $f$.
\end{example}

From now on, we assume that 
$(M^m,\E,\inner{~}{~},D,\phi)$ is co-orientable, and
fix a co-orientation $\mu$ on the coherent tangent bundle.
(If $\E$ is not co-orientable, one can take a double cover 
 $\pi\colon{}\widehat M^m\to M^m$ such that the pull-back of $\E$ by $\pi$ is
 a co-orientable coherent tangent bundle over $\widehat M^m$.)
\begin{definition}[\cite{SUY_JGEA}]\label{def:volume}
 The {\em signed $\phi$-volume form\/} $d\hat A_{\phi}$ and the 
 ({\em unsigned}) {\em $\phi$-volume form \/} $dA_{\phi}$  
 are defined  as
 \begin{equation}\label{eq:volume-form}
 \begin{aligned}
    d\hat A_\phi := \phi^*\mu = 
      \lambda_{\phi}\,du_1\wedge \dots \wedge du_m,\quad
    dA_\phi 
     := |\lambda_{\phi}|\,du_1 \wedge \dots \wedge du_m,
 \end{aligned}
 \end{equation}
 where $(U;u_1,\dots,u_m)$ is a local coordinate system of $M^m$
 compatible with the orientation of $M^m$, and 
 \begin{equation}\label{eq:jacobian}
    \lambda_{\phi}=\mu\left(
                \phi_{1},\dots ,
                \phi_{m}
      \right)
    \qquad
    \left(\phi_{j}=
         \phi\left(
         \frac{\partial}{\partial u_j}\right),
	 ~
         j=1,\dots,m\right).
 \end{equation}
 We call the function $\lambda_{\phi}$ 
 the {\em $\phi$-Jacobian function\/}  on $U$.
 The set of $\phi$-singular points on $U$ is expressed as
 \begin{equation}\label{eq:singular}
   \Sigma_{\phi}\cap U:=\{p\in U\,;\,\lambda_{\phi}(p)=0  \}. 
 \end{equation}
 Both $d\hat A_{\phi}$  and $dA_{\phi}$ are independent of the  choice of
 positively  oriented local coordinate system $(U;u_1,\dots,u_m)$, 
 and give two globally defined $m$-forms on $M^m$.
 ($d\hat A_{\phi}$ is $C^\infty$-differentiable, 
 but $dA_{\phi}$ is only
 continuous.)
 When $M^m$ has no $\phi$-singular points, the two forms 
 coincide up to sign. We set
 \begin{align*}
   M^+_{\phi}&:=\bigl\{p\in M^m\setminus \Sigma_{\phi} \,;\, 
           d\hat A_{\phi}(p)=dA_{\phi}(p)\bigr\}, \\
   M^-_{\phi}&:=\bigl\{p\in M^m\setminus \Sigma_{\phi}\,;\, 
           d\hat A_\phi(p)=-dA_{\phi}(p)
 \bigr\}.
 \end{align*}
 The $\phi$-singular set $\Sigma_{\phi}$ coincides with
 the boundary  $\partial M^+_\phi=\partial M^-_\phi$.
\end{definition}

A $\phi$-singular point $p$ $(\in\Sigma_{\phi})$ is called 
{\em non-degenerate\/} if $d\lambda_\phi$ does not vanish at $p$.
On a neighborhood of a non-degenerate $\phi$-singular point,
the $\phi$-singular set consists of an $(m-1)$-submanifold in $M^m$, 
called the {\em $\phi$-singular submanifold}.
If $p$ is a non-degenerate $\phi$-singular point, the rank of $\phi_p$ is $m-1$.
The direction of the kernel of $\phi_p$ is called the {\em  null direction}.
Let $\eta$ be the smooth (non-vanishing) vector field along the
$\phi$-singular submanifold $\Sigma_\phi$, which gives the null direction
at each point in $\Sigma_\phi$.

\begin{definition}[$A_2$-singular points, \cite{SUY_JGEA}]%
\label{def:a2-point}
 Let $(M^m,\E,\inner{~}{~},D,\phi)$ be a coherent tangent bundle.
 A non-degenerate $\phi$-singular point $p\in M^m$
 is called an {\em $A_2$-singular point\/} or
 an {\em $A_2$-point of $\phi$\/} 
 if the null direction $\eta(p)$ is transversal to the singular
 submanifold.
\end{definition}

We set
\begin{equation}\label{eq:lambda-prime}
       \lambda_\phi':=d\lambda_\phi(\tilde\eta),
\end{equation}
where $\tilde\eta$ is a vector field on a neighborhood $U$ of $p$
which coincides with $\eta$ on $\Sigma_\phi\cap U$.
Then $p$ is an $A_2$-point if and only if the function
$\lambda'_\phi$ does not vanish at $p$ (see \cite[Theorem 2.4]{SUY4}).

When $m=2$ and $(M^2,\E,\inner{~}{~},D,\phi)$ comes from a
front in $3$-manifold as in Example~\ref{ex:front}
(resp.\ a map into $2$-manifold as in Example~\ref{ex:map}), 
an $A_2$-point corresponds to a cuspidal edge (resp.\ a fold)
(cf.\ \cite{SUY3}).
%%%%%%%%%%%%%%%%%%%%%%%%%%%%%%%%%%%%%%%%%%%

\subsection{Singular curvatures}
Let $(M^m,\E,\inner{~}{~},D,\phi)$ be a coherent tangent bundle
and fix  a $\phi$-singular point $p\in\Sigma_{\phi}$ which is an
$A_2$-point.
Then there exists a neighborhood $U$ of $p$ such that 
$\Sigma_{\phi}\cap U$ consists of $A_2$-points.
Now we define the {\it singular shape operator\/} as follows:
Since the kernel of $\phi_p$ is transversal to $\Sigma_\phi$ at
$p$, $\phi|_{T(\Sigma_{\phi}\cap U)}$ is injective,
where $U$ is a sufficiently small neighborhood of $p$.
Then the metric $ds^2_\phi$ is positive definite on $\Sigma_\phi\cap U$.
We take an orthonormal frame field
$e_1$, $e_2$,\dots, $e_{m-1}$
on $\Sigma_{\phi}\cap U$ with respect to $ds^2_\phi$.
Without loss of generality, we may assume that 
$(e_1,e_2,\dots,e_{m-1})$ is
smoothly extended on $U$ as an orthonormal $(m-1)$-frame field.
Then we can take a unique smooth section
$\vect{n}:U\to \E$ (called the {\it conormal vector field})
so that
$(\phi(e_1),\dots,\phi(e_{m-1}), \vect{n})$
gives a positively oriented orthonormal frame field on $\E$.
Now, we set
\begin{equation}\label{eq:sing-shape}
   S_\phi(X):=
     -\sgn\left(d\lambda_\phi\bigl(\eta(q)\bigr)\right) 
         \phi^{-1}(D_X\vect n)
     \quad (X\in T_q\Sigma_\phi,\,\,q\in \Sigma_\phi\cap U),
\end{equation}
where the non-vanishing null vector field $\eta$ is chosen so that
$(e_1,\dots,e_{m-1},\eta)$ is compatible with respect to
the orientation of $M^m$.
It holds that 
\begin{equation}\label{eq:sign2}
   \sgn\bigl(d\lambda_{\varphi}(\eta(q))\bigr)
   =
   \begin{cases}
     \hphantom{-} 1 & 
        \mbox{if $\eta(q)$ points toward $M^+_{\phi}$},\\
     -1 & 
        \mbox{if $\eta(q)$ points toward $M^-_{\phi}$}.
   \end{cases}
\end{equation}
Since $\phi$ is injective on each tangent space of
$\Sigma_\phi$ and $D_X\vect{n}\in \phi(T\Sigma_\phi)$,
the inverse element $\phi^{-1}(D_X\vect{n})$ is uniquely determined. 
Thus we get a bundle endomorphism
$S_\phi:T\Sigma_\phi \to T\Sigma_\phi$
which is called the {\it singular shape operator\/} on $\Sigma_\phi$.

\begin{fact}[\cite{SUY_JGEA}]
\label{prop:sym}
 The definition of the singular shape operator $S_\phi$ is independent
 of the choice of an  orthonormal frame field 
 $e_1,\dots,e_{m-1}$,
 the choice of an  orientation of $M^m$,
 and the choice of a co-orientation of $\E$.
 Moreover, it holds that
 \[
   ds^2_\phi\bigl(S_\phi(X),Y\bigr)=ds^2_\phi\bigl(X,S_\phi(Y)\bigr)
        \qquad (X,Y\in T_q\Sigma_\phi,~q\in \Sigma_\phi),
 \]
 namely, $S_\phi$ is  symmetric with respect to $ds^2_{\varphi}$.
\end{fact}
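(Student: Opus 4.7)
The plan is to split the assertion into the three invariance statements and the symmetry statement, and to treat them in turn.

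For the invariance claims, I would track how the three ingredients of \eqref{eq:sing-shape}, namely $\vect{n}$, $\eta$, and $\sgn\bigl(d\lambda_\phi(\eta)\bigr)$, transform under each admissible change. If the frame $(e_1,\dots,e_{m-1})$ is replaced via a smooth map $A\colon U\to\O(m-1)$, then a short multilinear-algebra computation shows that the conormal $\vect{n}$—characterized by $\mu(\phi(e_1),\dots,\phi(e_{m-1}),\vect{n})=1$—is multiplied by $\det A$, and the null field $\eta$ is likewise multiplied by $\det A$ because $(e_1,\dots,e_{m-1},\eta)$ must remain positively oriented in $M^m$. Since $\lambda_\phi$ does not depend on the frame at all, $\sgn\bigl(d\lambda_\phi(\eta)\bigr)$ picks up exactly a $\det A$, cancelling the $\det A$ carried by $D_X\vect{n}$. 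If instead one reverses the orientation of $M^m$, only $\eta$ and $\lambda_\phi$ flip signs and their pairing $d\lambda_\phi(\eta)$ remains invariant at singular points, while $\vect{n}$ and $D_X\vect{n}$ are untouched. If one reverses the co-orientation $\mu\mapsto -\mu$, both $\vect{n}$ and $\lambda_\phi$ flip signs, so the two resulting $-1$'s in \eqref{eq:sing-shape} cancel.

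For the symmetry, the key input is the compatibility condition \eqref{eq:c}. Fix $X,Y\in T_q\Sigma_\phi$ and extend them to vector fields on a neighborhood of $q$. Because $\inner{\vect{n}}{\phi(Y)}=0$ along $\Sigma_\phi$, differentiating in the $X$-direction (and symmetrically) yields $\inner{D_X\vect{n}}{\phi(Y)}=-\inner{\vect{n}}{D_X\phi(Y)}$. Substituting into $ds^2_\phi\bigl(S_\phi(X),Y\bigr)-ds^2_\phi\bigl(X,S_\phi(Y)\bigr)$ reduces the difference to
\[
 \sgn\bigl(d\lambda_\phi(\eta)\bigr)\inner{\vect{n}}{D_X\phi(Y)-D_Y\phi(X)}
 =\sgn\bigl(d\lambda_\phi(\eta)\bigr)\inner{\vect{n}}{\phi([X,Y])}
\]
by \eqref{eq:c}. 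Since $X,Y$ are tangent to $\Sigma_\phi$, so is $[X,Y]$, hence $\phi([X,Y])\in\phi(T\Sigma_\phi)$ is orthogonal to $\vect{n}$ and the difference vanishes.

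The step I expect to require the most care is the sign bookkeeping in the invariance under orientation reversal of $M^m$: the function $\lambda_\phi$ is defined only through positively oriented local coordinates, so one must first verify that it really changes sign under the reversal and then check that at singular points the differential $d\lambda_\phi$ transforms compatibly with the flipped $\eta$, so that $d\lambda_\phi(\eta)$ is indeed invariant. Once these sign conventions are straightened out, the symmetry half follows quickly from a single application of \eqref{eq:c}.
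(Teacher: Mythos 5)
The paper states this result as a \emph{Fact} imported from \cite{SUY_JGEA} and gives no proof of it, so there is nothing in-paper to compare against; your argument is correct and is the expected one. In particular, the sign bookkeeping under the three admissible changes is right (each change flips exactly two of the sign-carrying ingredients $\vect{n}$, $\eta$, $\lambda_\phi$, or none), and the symmetry does reduce, via differentiating $\inner{\vect{n}}{\phi(Y)}=0$ along $\Sigma_\phi$ and applying \eqref{eq:c}, to the orthogonality $\inner{\vect{n}}{\phi([X,Y])}=0$, which holds because $[X,Y]$ is again tangent to $\Sigma_\phi$.
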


\begin{definition}[\cite{SUY_JGEA}]\label{def:singular-curvature}
 Let $p\in\Sigma_\phi$ be an $A_2$-point of $\phi$.
 Then
 \begin{equation}\label{eq:kappa}
  \kappa_\phi(X):=ds^2_\phi(S_\phi(X),X)/ds^2_\phi(X,X),
   \qquad (X\in T_p\Sigma_{\phi}\setminus\{0\})
 \end{equation}
 is called the {\it $\phi$-singular normal curvature\/}
 at $p$ with respect to the direction $X$.
 The eigenvalues of $S_\phi$ are called the 
 {\it $\phi$-singular principal curvatures\/}, 
 which give the critical values of the singular normal curvature on
 $T_p\Sigma_\phi$.
\end{definition}

In \cite[Theorem 2.13]{SUY_JGEA},
it was shown that at least one of the
$\phi$-singular principal curvatures
diverges to $-\infty$ at non-degenerate singular points
other than $A_2$-points.
When $m=2$, the $\phi$-singular principal curvature 
is called (simply) the {\it $\phi$-singular curvature},
which is also denoted by $\kappa_\phi$.
This definition of the singular curvature
is the same as in \cite[(1.7)]{SUY1} and \cite[(1.6)]{SUY2}.
More precisely, $\kappa_\phi$ is computed as follows:
Let $p\in\Sigma_\phi$ be an $A_2$-point of $\phi$.
Then the $\phi$-singular set $\Sigma_\phi$ is parametrized 
by a regular curve $\gamma(t)$ ($t\in I\subset\R$)
on $M^2$ on a neighborhood of $p$, 
and $\gamma(t)$ is an $A_2$-point of $\phi$ for each $t\in I$.
Since $\dot\gamma(t)$ ($\dot{~}=d/dt$) is not a null-direction,
$\phi\bigl(\dot\gamma(t)\bigr)\neq 0$.
Take a section $\vect{n}(t)$ of $\E$ along $\gamma$ such that
$\{\phi(\dot\gamma)/|\phi(\dot\gamma)|,\vect{n}\}$
gives a positive orthonormal frame field on $\E$ along $\gamma$,
where $|\phi(\dot\gamma)|=\inner{\phi(\dot\gamma)}{\phi(\dot\gamma)}^{1/2}$.
Then we have
\begin{equation}\label{eq:singular-curvature-2}
  \kappa_\phi(t) 
   :=\kappa_{\phi}\bigl(\dot\gamma(t)\bigr)
     = -\sgn \left(d\lambda_\phi\bigl(\eta(t)\bigr)\right)
    \frac{\inner{D_{d/dt}\vect{n}(t)}{\phi\bigl(\dot\gamma(t)\bigr)}}{%
            |\phi\bigl(\dot\gamma(t)\bigr)|^2},
\end{equation}
where $\eta(t)$ is a null-vector field along $\gamma(t)$ such that
$\{\dot\gamma(t),\eta(t)\}$ is compatible with the orientation of $M^2$.
By \eqref{eq:sign2}, 
it holds that 
\begin{equation}\label{eq:sign3}
   \sgn\bigl(d\lambda(\eta(t))\bigr)
   =\begin{cases}
    \hphantom{-}1 
       & \mbox{if  $M^+_{\phi}$ lies on 
               the left-hand side of $\gamma$},\\
     -1 
       & \mbox{if  $M^-_{\phi}$ lies on 
               the left-hand side of $\gamma$}.
\end{cases}
\end{equation}

\section{The realization of frontal bundles}
\label{sec:realization}

First, we recall a definition of frontal bundles 
given in \cite{SUY_JGEA},
and 
consider a realization problem of them as  fronts in space forms.

\subsection{Front bundles}
Let $M^m$ be an  oriented $m$-manifold and 
$(M^m,\E,\inner{~}{~},D,\phi)$ 
a co-orientable coherent tangent bundle over $M^m$.
If there exists another  bundle homomorphism $\psi:TM^m\to \E$
such  that $(M^m,\E,\inner{~}{~},D,\psi)$
is also a coherent tangent bundle and
the pair $(\phi,\psi)$ of bundle homomorphisms satisfies a compatibility
condition
\begin{equation}\label{eq:compati}
  \inner{\phi(X)}{\psi(Y)}=\inner{\phi(Y)}{\psi(X)},
\end{equation}
then $(M^m,\E,\inner{~}{~},D,\phi,\psi)$ is called a {\it frontal bundle}.
The bundle homomorphisms $\phi$ and $\psi$ are called
the {\em first homomorphism\/} and the {\em second homomorphism},
respectively.
We set
\begin{align*}
 \first(X,Y)&:=ds^2_{\phi}(X,Y)=\inner{\phi(X)}{\phi(Y)}, \\
 \second(X,Y)&:=-\inner{\phi(X)}{\psi(Y)},\\
 \third(X,Y)&:=ds^2_{\psi}(X,Y)= \inner{\psi(X)}{\psi(Y)} 
\end{align*}
for $X,Y\in T_pM^m$ ($p\in M^m$),  and we call them 
{\it the first, the second and the third fundamental forms},
respectively. 
They are all symmetric covariant tensors on $M^m$.

\begin{definition}[\cite{SUY_JGEA}]\label{def:front}
 A frontal bundle $(M^m,\E,\inner{~}{~},D, \phi,\psi)$
 is called  a {\it front bundle\/} if
 \begin{equation}\label{eq:front}
  \Ker(\phi_p)\cap \Ker(\psi_p)=\{0\}
 \end{equation}
 holds for each   $p\in M^m$.
\end{definition}

\begin{example}[\cite{SUY_JGEA}]\label{ex:front-bundle}
 Let $\bigl(N^{m+1}(c),g\bigr)$ be 
 an $(m+1)$-dimensional space form,
 that is, a complete Riemannian $(m+1)$-manifold of constant curvature
 $c$,
 and denote by $\nabla$ the Levi-Civita connection 
 on $N^{m+1}(c)$.
 Let $f:M^m\to N^{m+1}(c)$ be a co-orientable frontal.
 Then there exists a globally defined unit normal vector field $\nu$.
 Since the coherent tangent bundle $\E_f$ given in
 Example~\ref{ex:front} is orthogonal to $\nu$,
 we can define a bundle homomorphism
 \[
    \psi_f:T_pM^m\ni X\longmapsto \nabla_X\nu \in \E_p
            \qquad (p\in M^m).
 \]
 Then $(M^m,\E_f,\inner{~}{~},D,\phi_f,\psi_f)$ is a frontal bundle
 (we shall prove this in Proposition~\ref{prop:frontal} later).
 Moreover, this is a front bundle in the sense of
 Definition~\ref{def:front} if and only if $f$ is a front,
 which is equivalent to $\first+\third$ being positive definite.
\end{example}

\begin{remark}
\label{rem:meaning}
 As seen above, if $f:M^m\to N^{m+1}(c)$ is a front,
 then 
 \[
    (M^m,\E_f,\inner{~}{~},D,\phi_f,\psi_f)
 \]
 is a front bundle.
 Since $\phi=\phi_f$ and $\psi=\psi_f$ have the completely same
 conditions,
 the third fundamental form $\third$ plays the same role as
 $\first$ by definition.
 This means that we can reverse the role of $\first$ and $\third$.

 When $N^{m+1}(c)$ is the unit sphere $S^{m+1}$, then
 the unit normal vector field $\nu$ along $f$ can be considered
 as a map $\nu:M^m\to S^{m+1}$ and the third fundamental form of $f$
 coincides with 
 the first fundamental form of $\nu$.

 When $N^{m+1}(c)$ is the Euclidean space $\R^{m+1}$, then
 the unit normal vector field $\nu$ along $f$ can be considered
 as a map $\nu:M^m\to S^{m}$ and the
 third fundamental form of $f$ coincides with 
 the pull-back of the canonical metric of the unit sphere $S^m$ by $\nu$.

 Next, we cosider the case that $N^{m+1}(c)$ is the hyperbolic space
 $H^m$:
 \begin{equation}\label{eq:hyperbolic}
    H^{m+1}:=\{p=(p_0,\dots,p_{m+1})
        \in \R^{m+2}_1\,;\,p\cdot p=-1,~p_0>0\},
 \end{equation}
 where \lq$\cdot$\rq\ is the canonical Lorentzian
 metric of the Lorentz-Minkowski space $\R^{m+2}_1$. 
 The unit normal vector field $\nu$ along $f$ can be considered
 as a map $\nu:M^m\to S^{m+1}_1$ and the
 third fundamental form of $f$ coincides with
 the first fundamental form of $\nu$, where 
 \begin{equation}\label{eq:de-sitter}
    S^{m+1}_1:=\{p\in \R^{m+2}_1\,;\,p\cdot p=1\}
 \end{equation}
 is the  de Sitter space form.
\end{remark}

\begin{proposition}
\label{prop:frontal}
 Let $f: M^m\to N^{m+1}(c)$ be a  co-orientable  frontal,
 and $\nu$ a unit normal vector field.
 Then 
 $(M^m,\E_f,\inner{~}{~}, D, \phi_f,\psi_f)$
 as in Example~\ref{ex:front-bundle} is a frontal bundle.
 Moreover, the following identity 
 {\rm (}i.e. the Gauss equation{\rm)}
 holds{\rm:}
 \begin{multline}\label{eq:G}
  \langle R^D(X,Y)\xi,\zeta\rangle\\=
  c
  \det\pmt{%
    \inner{\phi(Y)}{\xi}& \inner{\phi(Y)}{\zeta} \\
    \inner{\phi(X)}{\xi}& \inner{\phi(X)}{\zeta} 
  }
  +
  \det\pmt{%
    \inner{\psi(Y)}{\xi}& \inner{\psi(Y)}{\zeta} \\
    \inner{\psi(X)}{\xi}& \inner{\psi(X)}{\zeta} 
  },
 \end{multline}
 where $\phi=\phi_f$ and $\psi=\psi_f$, 
 $X$ and $Y$ are vector fields on $M^m$,
 $\xi$ and $\zeta$ are sections of $\E_f$,
 and $R^D$ is the curvature tensor of the connection $D${\rm:}
 \[
    R^D(X,Y)\xi:=D_XD_Y\xi-D_YD_X\xi-D_{[X,Y]}\xi.
 \]
 Furthermore, this frontal bundle 
 is a front bundle if and only if $f$ is a front.
\end{proposition}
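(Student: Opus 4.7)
The plan is to verify three items in sequence: (i) $\psi_f$ makes $(M^m,\E_f,\inner{~}{~},D,\psi_f)$ into a second coherent tangent bundle and is compatible with $\phi_f$ in the sense of \eqref{eq:compati}; (ii) the Gauss equation \eqref{eq:G} holds; (iii) the front bundle condition \eqref{eq:front} is equivalent to $f$ being a front. The coherent tangent bundle structure of $\phi_f$ alone is already provided by Example~\ref{ex:front}.

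For (i), the unit-length condition $\inner{\nu}{\nu}=1$ gives $\nabla_X\nu\perp\nu$, so $\psi_f$ indeed takes values in $\E_f$ and $D_X\psi_f(Y)$ is the tangential part of $\nabla_X\nabla_Y\nu$. Hence the left-hand side of \eqref{eq:c} for $\psi_f$ equals the tangential part of $R^\nabla\bigl(df(X),df(Y)\bigr)\nu$, and this vanishes because in the space form $R^\nabla(U,V)W=c\bigl(\inner{V}{W}U-\inner{U}{W}V\bigr)$ and $\nu\perp df(X),df(Y)$. The compatibility \eqref{eq:compati} follows from differentiating $\inner{df(X)}{\nu}=0$ along $Y$ (and vice versa), subtracting, and applying the torsion-free identity $\nabla_X df(Y)-\nabla_Y df(X)=df([X,Y])$, which is perpendicular to $\nu$.

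For (ii), the key algebraic ingredient is the tangential-normal decomposition
\[
\nabla_X\xi=D_X\xi-\inner{\psi_f(X)}{\xi}\,\nu
\]
for any section $\xi$ of $\E_f$, obtained from $\inner{\xi}{\nu}=0$. Iterating this formula in the identity $R^\nabla\bigl(df(X),df(Y)\bigr)\xi=\nabla_X\nabla_Y\xi-\nabla_Y\nabla_X\xi-\nabla_{[X,Y]}\xi$ and extracting the tangential component produces
\[
R^D(X,Y)\xi=R^\nabla\bigl(df(X),df(Y)\bigr)\xi+\inner{\psi_f(Y)}{\xi}\,\psi_f(X)-\inner{\psi_f(X)}{\xi}\,\psi_f(Y).
\]
Pairing with $\zeta\in\E_f$ and substituting the space-form formula for $R^\nabla$ reproduces exactly the two determinants on the right of \eqref{eq:G}. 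For (iii), $f$ is a front iff $\nu\colon M^m\to T_1N^{m+1}(c)$ is an immersion; using the Levi-Civita horizontal-vertical splitting of $T(T_1N^{m+1}(c))$, one identifies $d\nu(X)$ with the pair $\bigl(df(X),\nabla_X\nu\bigr)=\bigl(\phi_f(X),\psi_f(X)\bigr)$, so $d\nu_p$ is injective precisely when $\Ker(\phi_f)_p\cap\Ker(\psi_f)_p=\{0\}$.

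The main obstacle is the bookkeeping in step (ii): iterating the decomposition above yields several unwanted normal contributions of the form $X\bigl(\inner{\psi_f(Y)}{\xi}\bigr)\,\nu$ and $\inner{\psi_f(X)}{D_Y\xi}\,\nu$, and one must verify that these cancel exactly by combining the metric property of $D$ with the relation \eqref{eq:c} for $\psi_f$ proved in (i). Once this cancellation is confirmed, the clean tangential identity above — and hence \eqref{eq:G} — is immediate.
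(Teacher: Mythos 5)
Your proposal is correct and follows essentially the same route as the paper: both rest on the decomposition $\nabla_X\xi=D_X\xi-\inner{\psi_f(X)}{\xi}\nu$, the resulting curvature identity split into tangential and normal parts, and the constant-curvature formula for $R^\nabla$. The only organizational difference is that you establish \eqref{eq:c} for $\psi_f$ directly from $R^\nabla\bigl(df(X),df(Y)\bigr)\nu=0$ and then invoke it to cancel the unwanted normal terms, whereas the paper reads that same Codazzi identity off as the normal component of its master identity \eqref{eq:curvature-tensor}; the cancellation you flag as the remaining bookkeeping is exactly that normal-component computation (metricity of $D$ plus \eqref{eq:c} for $\psi_f$), so it goes through.
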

\begin{proof}
 Let $R^c$ be the curvature tensor of $N^{m+1}(c)$.
 Since 
 \[
    \nabla_X\xi=D_X\xi-\langle \psi_f(X),\xi \rangle \nu
 \]
 holds for the Levi-Civita connection $\nabla$
 of $N^{m+1}(c)$,
 we have the following identity:
 \begin{multline}\label{eq:curvature-tensor}
  R^c\bigl(df(X),df(Y)\bigr)\xi 
  =R^D(X,Y)\xi
    -\inner{\psi_f(Y)}{\xi}\psi_f(X) 
    +\inner{\psi_f(X)}{\xi}\psi_f(Y) \\
     -\bigl(\inner{D_X\psi_f(Y)}{\xi} -
     \inner{D_Y\psi_f(X)}{\xi}
     -\inner{\psi_f([X,Y])}{\xi}\bigr) \nu.
 \end{multline}
 Taking the normal component, we get 
 \[
  \inner{D_X\psi_f(Y)}{\xi}-
  \inner{D_Y\psi_f(X)}{\xi}
   =\inner{\psi_f\bigl([X,Y]\bigr)}{\xi}.
 \]
 Since $\xi$ is  arbitrary, this  proves that
 $(M^m,\E_f,\inner{~}{~},D,\psi_f)$ is a coherent tangent bundle.
 Moreover,
 \[
    \inner{\phi_f(X)}{\psi_f(Y)}=
        g\bigl(
        df(X),\nabla_X\nu
        \bigr)
        =\inner{\phi_f(Y)}{\psi_f(X)}.
 \]
 Hence $(M^m,\E_f,\inner{~}{~},D,\phi_f,\psi_f)$ is a frontal bundle.

 On the other hand,
 taking the tangential component of \eqref{eq:curvature-tensor},
 we get 
 \[
  R^c\bigl(df(X),df(Y)\bigr)\xi=
  R^D(X,Y)\xi
    -\inner{\psi_f(Y)}{\xi}\psi_f(X) 
     +\inner{\psi_f(X)}{\xi} \psi_f(Y). 
 \]
 Since $(N^{m+1}(c),g)$ is of constant curvature $c$,
 it holds that
 \[ 
  R^c\bigl(df(X),df(Y)\bigr)\xi=
   c\biggl(
       \inner{\phi_f(Y)}{\xi}\phi_f(X) 
        -\inner{\phi_f(X)}{\xi} \phi_f(Y)
     \biggr ),
 \]
 and hence we get the Gauss equation \eqref{eq:G}.
\end{proof}

\begin{definition}\label{def:integrability}
 For a real number $c$,
 a frontal bundle $(M^m,\E,\inner{~}{~},D,\phi,\psi)$ is said to be 
 {\em $c$-integrable\/} if and only if
 \eqref{eq:G} holds.
\end{definition}

\subsection{A realization of frontal bundles}
Now, we give the fundamental theorem for frontal bundles.
To state the theorem, we define equivalence of frontal bundles:
\begin{definition}\label{def:isomorphic}
 Two frontal bundles 
 over $M^m$  are {\em isomorphic\/} or {\em equivalent\/} if 
 there exists an orientation preserving
 bundle isomorphism between them
 which preserves the inner products, the connections and the bundle
 maps.
\end{definition}
Let $\bigl(\widetilde N^{m+1}(c),g\bigr)$ be the $(m+1)$-dimensional
{\em simply connected\/} space form of constant curvature $c$.
\begin{theorem}[Realization of frontal bundles]%
\label{thm:fundamental}
 Let  $(U,\E,\inner{~}{~},D,\phi,\psi)$ be a $c$-in\-te\-grable
 frontal bundle over a simply connected domain $U\subset\R^m$,
 where $c$ is a real number.
 Then there exists a frontal
 $f\colon{}U \to \widetilde{N}^{m+1}(c)$
 such that $\E$ is isomorphic to 
 $\E_{f}$  induced from $f$ as in Proposition~\ref{prop:frontal}.
 Moreover, such an $f$ is unique up to orientation preserving isometries
 of $\widetilde N^{m+1}(c)$.
\end{theorem}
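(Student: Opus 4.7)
The plan is to run a Bonnet-type argument: I would augment $\E$ to an ambient vector bundle equipped with a flat metric connection, and extract $f$ from parallel sections using simple-connectedness of $U$. The crucial observation is that the four integrability conditions of a $c$-integrable frontal bundle --- namely \eqref{eq:c} applied to $\phi$, \eqref{eq:c} applied to $\psi$, the symmetry \eqref{eq:compati}, and the Gauss equation \eqref{eq:G} --- correspond exactly to the vanishing of the four block-components of the curvature of that extended connection.

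Concretely, when $c\neq 0$ I would enlarge $\E$ to a rank $(m+2)$ bundle $\hat\E:=\E\oplus\R\vect{\nu}\oplus\R\vect{f}$, declaring $\vect{\nu}$ and $\vect{f}$ orthogonal to $\E$ and to each other with $\inner{\vect{\nu}}{\vect{\nu}}=1$ and $\inner{\vect{f}}{\vect{f}}=1/c$; when $c=0$ I would use instead the rank $(m+1)$ bundle $\E\oplus\R\vect{\nu}$. The extended connection $\hat D$ is defined by
\[
 \hat D_X\xi = D_X\xi - c\inner{\phi(X)}{\xi}\vect{f} - \inner{\psi(X)}{\xi}\vect{\nu},\qquad \hat D_X\vect{\nu} = \psi(X),\qquad \hat D_X\vect{f}=\phi(X),
\]
for $\xi\in\Gamma(\E)$ (the $\vect{f}$-terms being suppressed when $c=0$). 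Metric compatibility of $\hat D$ is immediate from the definitions. The central task is to verify that $\hat D$ is flat: decomposing $R^{\hat D}(X,Y)\sigma$ into $\E$-, $\vect{\nu}$- and $\vect{f}$-components as $\sigma$ ranges over $\Gamma(\E)$ and the distinguished sections, the $\E$-component for $\sigma\in\Gamma(\E)$ reproduces the Gauss equation \eqref{eq:G} (this is exactly the $c$-integrability hypothesis); the $\vect{\nu}$- and $\vect{f}$-components for $\sigma\in\Gamma(\E)$ reduce to \eqref{eq:c} applied to $\psi$ and $\phi$ respectively; and the three components arising from $\sigma=\vect{\nu}$ or $\sigma=\vect{f}$ collapse to the same two Codazzi-type identities together with the symmetry \eqref{eq:compati}.

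Once $\hat D$ is a flat metric connection, simple-connectedness of $U$ yields a global positively oriented parallel orthonormal frame of $\hat\E$, i.e.\ a bundle isometry $\Phi:\hat\E\to U\times E$ where $E$ is $\R^{m+2}$ for $c>0$, $\R^{m+2}_1$ for $c<0$, or $\R^{m+1}$ for $c=0$. For $c\neq 0$ I would define $f:U\to E$ as the image of the section $\vect{f}$ under $\Phi$; since $\inner{\vect{f}}{\vect{f}}=1/c$ is constant under $\hat D$, its image lies on the standard quadric realising $\tilde N^{m+1}(c)\subset E$, while the identity $df(X)=\hat D_X\vect{f}=\Phi(\phi(X))$ exhibits $f$ as a frontal with induced first homomorphism $\phi$ and with unit normal equal to the image of $\vect{\nu}$, whence $\psi_f=\psi$ as well. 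In the Euclidean case there is no $\vect{f}$, but the same formalism makes $\Phi\circ\phi$ into a closed $\R^{m+1}$-valued one-form on $U$ (this is precisely \eqref{eq:c} read through the trivialisation), and the Poincar\'e lemma supplies a primitive $f:U\to\R^{m+1}$ with the same properties. Uniqueness up to orientation preserving isometries of $\tilde N^{m+1}(c)$ then follows because any two positively oriented parallel orthonormal frames of $\hat\E$ differ by a constant element of the identity component of the ambient isometry group, whose induced action on the quadric model is exactly by the orientation preserving isometry group of $\tilde N^{m+1}(c)$. The main obstacle is the flatness computation; the algebra is routine, but it demands that each of \eqref{eq:c} (for both $\phi$ and $\psi$), \eqref{eq:compati}, and \eqref{eq:G} land in precisely the right block, and the uniform treatment across the three signatures of $E$ requires some bookkeeping.
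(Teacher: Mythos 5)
Your proposal is correct and is essentially the paper's own argument in invariant dress: the extended flat metric connection $\hat D$ on $\E\oplus\R\vect{\nu}(\oplus\R\vect{f})$ is exactly the connection whose matrix form in an adapted orthonormal frame is the $\widetilde\Omega$ of \eqref{eq:gauss-wein-euc}--\eqref{eq:gauss-wein-hyp}, and your flatness computation is the integrability condition $d\widetilde\Omega+\widetilde\Omega\wedge\widetilde\Omega=0$ that the paper identifies with the Gauss and Codazzi equations before integrating $d\F=\F\widetilde\Omega$ over the simply connected $U$. The case split by the sign of $c$, the recovery of $f$ from the distinguished parallel section (or from a primitive of the closed $\R^{m+1}$-valued form when $c=0$), and the uniqueness via constancy of the frame change all match the paper's proof.
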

Let $S^{m+1}_1$ be the de Sitter space of constant
sectional curvature $1$. 
As mentioned in Remark \ref{rem:meaning},
$S^{m+1}_1$ can be identified with the hyperquadric in 
the Lorentz-Minkowski space $\R^{m+2}_1$ 
(see \eqref{eq:de-sitter}).
A $C^\infty$-map $f:M^m\to S^{m+1}_1$ is called a {\it frontal\/}
if there exists a  $C^\infty$-map 
\[
   \nu:M^m\longrightarrow H^{m+1}:=\{p=(p_0,\dots,p_{m+1})
       \in \R^{m+2}_1\,;\,p\cdot p=-1,\,\,p_0>0\}
\]
such that
$d\nu\cdot f=\nu\cdot df=0$.
Moreover, $f$ is called a {\it {\rm(}wave{\rm)} front} if
$(f,\nu):M^m\to \R^{m+2}_1\times \R^{m+2}_1$
is an immersion.
By definition, $f$ is a front if and only if  $\nu$
also is. 
Thus, by interchanging the role of 
the first homomorphism and the second homomorphism,
we get the following
\begin{corollary}
\label{cor:fundamental}
 Let  $(U,\E,\inner{~}{~},D,\psi,\phi)$ be a $(-1)$-integrable
 frontal bundle over a simply connected domain $U\subset\R^m$.
 Then there exists a frontal
 $\nu\colon{}U \to S^{m+1}_1$
 such that $\E$ is isomorphic to  $\E_{f}$  induced from $\nu$.
 Moreover, such an $\nu$ is unique up to orientation 
 preserving isometries  of $S^{m+1}_1$.
\end{corollary}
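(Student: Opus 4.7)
The corollary should follow from Theorem~\ref{thm:fundamental} by the swap $(\phi,\psi)\leftrightarrow(\psi,\phi)$, combined with the duality $H^{m+1}\leftrightarrow S^{m+1}_1$ recalled in Remark~\ref{rem:meaning}.

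First, I would unpack the $(-1)$-integrability hypothesis: Definition~\ref{def:integrability} applied to $(U,\E,\inner{~}{~},D,\psi,\phi)$ (with $\psi$ the first and $\phi$ the second homomorphism) reads
\[
 \inner{R^D(X,Y)\xi}{\zeta}
  =-\det\pmt{\inner{\psi(Y)}{\xi}&\inner{\psi(Y)}{\zeta}\\
             \inner{\psi(X)}{\xi}&\inner{\psi(X)}{\zeta}}
  +\det\pmt{\inner{\phi(Y)}{\xi}&\inner{\phi(Y)}{\zeta}\\
             \inner{\phi(X)}{\xi}&\inner{\phi(X)}{\zeta}}.
\]
This is exactly the Gauss equation \eqref{eq:G} for the frontal bundle $(U,\E,\inner{~}{~},D,\psi,\phi)$ with $c=-1$, in the form required by Theorem~\ref{thm:fundamental}.

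Next, I would apply Theorem~\ref{thm:fundamental} to this relabelled bundle: since $\widetilde N^{m+1}(-1)=H^{m+1}\subset \R^{m+2}_1$, the theorem produces a frontal $\tilde f\colon U\to H^{m+1}$, unique up to orientation-preserving isometries of $H^{m+1}$, whose associated frontal bundle $(\E_{\tilde f},\phi_{\tilde f},\psi_{\tilde f})$ from Example~\ref{ex:front-bundle} is isomorphic to $(\E,\psi,\phi)$; that is, $d\tilde f$ realizes $\psi$ and $\nabla\nu$ realizes $\phi$, where $\nu$ is the unit normal of $\tilde f$. One then verifies that the map $\nu\colon U\to S^{m+1}_1$ so produced is itself a frontal in the sense stated just before the corollary, with $\tilde f$ playing the role of its companion map into $H^{m+1}$: this is immediate from the Lorentzian identities $\tilde f\cdot\nu=0$, $\tilde f\cdot d\nu=0$ and $d\tilde f\cdot\nu=0$.

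Finally, observe that the coherent tangent bundle $\E_\nu$ associated to $\nu$ is the subbundle of $\nu^*T\R^{m+2}_1$ orthogonal to both $\nu$ and $\tilde f$, which coincides with $\E_{\tilde f}$, and that swapping the roles of $\tilde f$ and $\nu$ interchanges first and second homomorphisms: $(\phi_\nu,\psi_\nu)=(\psi_{\tilde f},\phi_{\tilde f})$. Combined with the previous step this yields $(\E_\nu,\phi_\nu,\psi_\nu)\cong(\E,\phi,\psi)$; uniqueness of $\nu$ then follows from that of $\tilde f$, because every orientation-preserving isometry of $S^{m+1}_1$ is the restriction of an orientation-preserving Lorentz transformation preserving $H^{m+1}$. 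The only genuine issue is the identification of the connections in this last step, i.e.\ that the connection on $\E_\nu$ reconstructed from the $\nu$-frontal agrees with the one reconstructed from the $\tilde f$-frontal. This is immediate because both equal the orthogonal projection of the flat connection on $\R^{m+2}_1$ onto $\tilde f^\perp\cap\nu^\perp$, a construction manifestly symmetric in $\tilde f$ and $\nu$, so no computation beyond Proposition~\ref{prop:frontal} is required.
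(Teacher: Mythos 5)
Your proposal is correct and follows essentially the same route as the paper: the paper derives Corollary~\ref{cor:fundamental} from Theorem~\ref{thm:fundamental} precisely by interchanging the roles of the first and second homomorphisms and observing that the unit normal $\nu$ of the resulting frontal $\tilde f\colon U\to H^{m+1}$ is a frontal into $S^{m+1}_1$ (the paper states this in the two sentences preceding the corollary and gives no further detail, so your write-up just makes those steps explicit). The only quibble is your final remark that every orientation-preserving isometry of $S^{m+1}_1$ restricts from a Lorentz transformation preserving $H^{m+1}$ — a Lorentz transformation can swap the two sheets of $\{p\cdot p=-1\}$ while preserving de Sitter space — but this direction is not needed for uniqueness, which follows from $SO_0(1,m+1)$ acting simultaneously on $H^{m+1}$ and $S^{m+1}_1$ as you say.
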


\begin{proof}[Proof of Theorem~\ref{thm:fundamental}]
 To prove Theorem \ref{thm:fundamental},
 we write down the fundamental equations for frontals.
 First, we consider the case $c=0$.
 Let $f\colon{}U\to \R^{m+1}=\widetilde N^{m+1}(0)$ be a frontal,
 where we consider elements in the Euclidean space $\R^{m+1}$ as 
 column vectors.
 Then the unit normal vector field $\nu$ can be considered as 
 a map $\nu\colon{}U\to S^m\subset\R^{m+1}$, and $\nabla\nu=d\nu$,
 where $\nabla$ is the Levi-Civita connection of $\R^{m+1}$.
 Thus the corresponding frontal bundle is 
 $(U,\E_{f},\inner{~}{~},D,\phi:=df,\psi:=d\nu)$.
 Take a positively oriented orthonormal frame field 
 (called an {\em adopted frame field\/} of $f$)
 \begin{equation}\label{eq:adopted-frame-euc}
    \F:=(\vect{e}_1,\dots,\vect{e}_{m+1}):
        U\longrightarrow \SO(m+1)
 \end{equation}
 of $\R^{m+1}$ along $f$ such that $\vect{e}_{m+1}=\nu$.
 Since $\nu=\vect{e}_{m+1}$, $\{\vect{e}_1,\dots,\vect{e}_m\}$
 is an orthonormal frame field of $\E_{f}$.
 Let $\omega_i^j$ be the {\em connection forms\/} of $D$
 with respect to this basis, as $1$-forms on $U$:
 \begin{equation}\label{eq:conn-form-2}
     D\vect{e}_i = \sum_{l=1}^{m}\omega_i^l\vect{e}_l,\qquad 
      \omega_i^j=-\omega_j^i \quad     (i,j=1,\dots,m).
 \end{equation}
 Define an $\so(m)$-valued $1$-form $\Omega$ 
 by $\Omega=(\omega_i^j)$.
 Next, we define $\R^{m}$-valued $1$-forms
 $\vect{g}$ and $\vect{h}$ as 
 \begin{equation}\label{eq:g-h}
   \vect{g}:=\trans{(g^1,\ldots,g^m)},\qquad 
   \vect{h}:=\trans{(h^1,\ldots,h^m)}
 \end{equation}
 with
 \[
    g^j:=\inner{\phi}{\vect{e}_j},\quad
    h^j:=-\inner{\psi}{\vect{e}_j}\qquad (j=1,\dots,m),
 \]
 where $\R^{m}$ is considered as a column vector space.
 Then, by definition, 
 the adapted frame $\F$ in \eqref{eq:adopted-frame-euc}
 satisfies the ordinary differential equation
 \begin{equation}
   \label{eq:gauss-wein-euc}
     df  = \sum_{l=1}^m g^l\vect{e}_l,\quad 
     d\F = \F \widetilde\Omega,\quad 
   \widetilde\Omega =
    \begin{pmatrix}
       \Omega & -\vect{h} \\
       \trans{\vect{h}} & \hphantom{-}0 
    \end{pmatrix}.
 \end{equation}

 Next, we consider the case $c>0$.
 Without loss of generality, we may assume that $c=1$.
 In this case,
 $\widetilde N^{m+1}(c)$ can be considered as the
 unit sphere  $S^{m+1}(\subset \R^{m+2})$ centered at the origin.
 Let $f\colon{}U\to S^{m+1}$  
 be a frontal with the unit normal vector field
 $\nu\colon{}U\to S^{m+1}$.
 Then the coherent tangent bundle $\E_{f}$ is written as
 \begin{equation}\label{eq:ef}
  \E_{f} =\{\vect{x}\in\R^{m+2}\,;\,\vect{x}\cdot{f}=\vect{x}\cdot\nu=0\},
 \end{equation}
 where 
 ``$\cdot$'' is the canonical inner product of $\R^{m+2}$.
 The induced inner product $\inner{~}{~}$ of $\E_{f}$ is the restriction
 of ``$\cdot$''.
 Take an $\SO(m+2)$-valued function  ({\em an adopted frame})
 $\F:=(\vect{e}_0,\dots,\vect{e}_{m+1})\colon{}U \to \SO(m+2)$
 such that
 $\vect{e}_0 := f$, $\vect{e}_{m+1}:=\nu$.
 Since $d\nu\cdot f=d\nu\cdot\nu=0$,
 $d\nu$ is a  $\E_{f}$-valued $1$-form, and then it holds that
 \[
          \nabla \nu=d\nu,
 \]
 where $\nabla$ is the Levi-Civita connection of $S^{m+1}$.
 Thus, setting $\phi=df$ and $\psi=d\nu$, we have the frontal
 bundle.
 Denoting by $\omega_i^j$ ($i,j=1,\dots,m$) the connection forms 
 of $D$ with respect to $\{\vect{e}_j\}$,
 the adapted frame field $\F$ satisfies 
 \begin{equation}\label{eq:gauss-wein-sphere}
    d\F = \F \widetilde\Omega,\qquad
   \widetilde\Omega =
    \begin{pmatrix}
       0 & -\trans{\vect{g}} & \hphantom{-}0 \\
      {\vect{g}}&\hphantom{-}\Omega & -\vect{h} \\
      0 & \hphantom{-}\trans{\vect{h}} & \hphantom{-}0
    \end{pmatrix},
 \end{equation}
 where $\Omega=(\omega_i^j)$, and $\vect{g}$ and $\vect{h}$ are as 
 in \eqref{eq:g-h} in the case of $c=0$.

 Finally, we consider the case $c<0$.
 We may assume that $c=-1$. Then
 $\widetilde N^{m+1}(-1)$ is the hyperbolic space $H^{m+1}$ 
 as in \eqref{eq:hyperbolic}.
 Let  $f\colon{}M^m\to H^{m+1}$ be a 
 frontal and $\nu$ be the unit normal vector field.
 Then $\nu$ is a space-like frontal in de Sitter space $S^{m+1}_1$
 as in \eqref{eq:de-sitter},
 and the coherent tangent bundle is written 
 like as \eqref{eq:ef}, using the canonical Lorentzian inner product.
 Take an $\SO_0(1,m+1)$-valued function  ({\em an adapted frame})
 $\F:=(\vect{e}_0,\dots,\vect{e}_{m+1})\colon{}U\to \SO_0(1,m+1)$
 such that
 $\vect{e}_0 := f$, $\vect{e}_{m+1}:=\nu$,
 where $\SO_0(1,m+1)$ is the identity component of the group of linear
 isometries $\O(1,m+1)$ of $\R^{m+2}_1$.
 Similar to the case of $c>0$, it holds that $\nabla \nu=d\nu$, and then 
 we can set $\phi=df$, $\psi=d\nu$.
 Hence the adapted frame field $\F$ satisfies 
 \begin{equation}\label{eq:gauss-wein-hyp}
    d\F = \F \widetilde\Omega,\qquad
   \widetilde\Omega =
    \begin{pmatrix}
       0 & \hphantom{-}\trans{\vect{g}} & \hphantom{-}0 \\
      {\vect{g}}&\hphantom{-}\Omega & -\vect{h} \\
      0 & \hphantom{-}\trans{\vect{h}} & \hphantom{-}0
    \end{pmatrix},
 \end{equation}
 as well as the case of $c>1$, where
 $\Omega=(\omega_i^j)$ and $\vect{g}$ and $\vect{h}$ are as in 
 \eqref{eq:g-h}.

 Now, in these situation, 
 the Gauss equation \eqref{eq:G} and 
 the Codazzi equation \eqref{eq:c} for $\psi$
 can be considered as the integrability conditions for
 the differential equations \eqref{eq:gauss-wein-euc} 
 and \eqref{eq:gauss-wein-sphere}.
 Thus we get the assertion.
\end{proof}

%%%%%%%%%%%%%%%%%%%%%%%%%%%%%%%%%%%%%%%%%%%%%%%%%%%%%%%%%%%%%%%%
We give here several applications of the realization theorem.
\begin{theorem}[%
  Maps into $\widetilde N^m(c)$ of an $m$-dimensional domain]
\label{thm:morin}
 Let $U$  be a simply connected domain on $\R^m$
 and $(U,\E,\inner{~}{~},D,\phi)$  a 
 coherent tangent bundle over $U$.
 Assume that for any vector fields $X$, $Y$ on $U$ and 
 a section  $\xi$ of $\E$, it holds that
 \begin{equation}\label{eq:morin-integrable}
    R^D(X,Y)\xi
    =c\biggl(%
         \inner{\phi(Y)}{\xi}\phi(X)-
	 \inner{\phi(X)}{\xi}\phi(Y)
      \biggr),
 \end{equation}
 where $R^D$ is the curvature tensor of $D$.
 Then there exists a $C^{\infty}$-map 
 $f\colon{}U\to \widetilde N^m(c)$
 into the $m$-dimensional simply connected space form 
 $\widetilde N^m(c)$ such that $\E$ and $\E_{f}$ 
 {\rm (}as in Example ~\ref{ex:map}{\rm)}
 are isomorphic.
\end{theorem}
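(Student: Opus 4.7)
The plan is to reduce to the frontal-bundle realization theorem (Theorem~\ref{thm:fundamental}) by promoting the given coherent tangent bundle to a frontal bundle with trivial second homomorphism. Concretely, set $\psi\equiv 0$ as a bundle homomorphism $TU\to\E$. Then $(U,\E,\inner{~}{~},D,\psi)$ is automatically a coherent tangent bundle, because the Codazzi-type identity \eqref{eq:c} for $\psi$ reduces to $0=0$, and the compatibility condition \eqref{eq:compati} also becomes trivial. Hence $(U,\E,\inner{~}{~},D,\phi,\psi)$ is a frontal bundle. With $\psi=0$, the second determinant on the right-hand side of \eqref{eq:G} vanishes, and the first determinant expands to give, as $\zeta$ runs over all sections of $\E$, precisely the hypothesis \eqref{eq:morin-integrable}. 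So this frontal bundle is $c$-integrable.

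Applying Theorem~\ref{thm:fundamental}, we obtain a frontal $\tilde f\colon U\to\widetilde N^{m+1}(c)$ whose associated frontal bundle is isomorphic to $(U,\E,\inner{~}{~},D,\phi,0)$; in particular $\psi_{\tilde f}=\nabla\nu\equiv 0$ for the unit normal $\nu$ of $\tilde f$. The next step is to argue that $\psi_{\tilde f}\equiv 0$ forces $\tilde f$ to factor through a totally geodesic hypersurface. Inspecting the adapted-frame ODEs \eqref{eq:gauss-wein-euc}, \eqref{eq:gauss-wein-sphere}, and \eqref{eq:gauss-wein-hyp} from the proof of Theorem~\ref{thm:fundamental}, the vanishing of the $\R^m$-valued $1$-form $\vect{h}$ (which encodes $\psi_{\tilde f}$) makes the last column of $\widetilde\Omega$ identically zero, so $\vect{e}_{m+1}=\nu$ is a constant vector $\nu_0$ in the ambient space $\R^{m+1}$, $\R^{m+2}$, or $\R^{m+2}_1$. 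Therefore $\tilde f(U)$ lies in $\{x\in\widetilde N^{m+1}(c):x\cdot\nu_0=0\}$ (an affine hyperplane orthogonal to $\nu_0$ when $c=0$), which is totally geodesic in $\widetilde N^{m+1}(c)$ and isometric to $\widetilde N^m(c)$. Composing with this isometric embedding yields the desired map $f\colon U\to\widetilde N^m(c)$.

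Finally, I would identify the coherent tangent bundles. Since $\nu\equiv\nu_0$ is normal to the totally geodesic copy of $\widetilde N^m(c)$ inside $\widetilde N^{m+1}(c)$, the subbundle of $\tilde f^*T\widetilde N^{m+1}(c)$ perpendicular to $\nu$ is exactly $f^*T\widetilde N^m(c)=\E_f$ in the sense of Example~\ref{ex:map}; moreover, because $\widetilde N^m(c)$ is totally geodesic, the restricted metric and the tangential connection agree with those of $\widetilde N^m(c)$, and $\phi_f=df$ on the nose. Hence the isomorphism $\E\cong\E_{\tilde f}$ supplied by Theorem~\ref{thm:fundamental} is also an isomorphism $\E\cong\E_f$, as required. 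The only substantive obstacle I anticipate is the totally geodesic factorization in the middle paragraph: it follows cleanly from the explicit form of $\widetilde\Omega$ and is essentially a one-line ODE observation once that form is written down.
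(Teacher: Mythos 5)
Your proposal is correct and follows essentially the same route as the paper: the paper also takes $\psi=\vect{0}$, observes that \eqref{eq:morin-integrable} makes $(U,\E,\inner{~}{~},D,\phi,\vect{0})$ a $c$-integrable frontal bundle, invokes Theorem~\ref{thm:fundamental}, and concludes that the image lies in a totally geodesic hypersurface. You merely supply more detail (the constancy of $\nu$ read off from the vanishing last column of $\widetilde\Omega$) than the paper's two-line argument.
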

\begin{proof}
 Consider the trivial bundle map
 $\vect{0}\colon{}TM^{m}\ni X\mapsto \vect{0}\in\E$.
 Then by \eqref{eq:morin-integrable}, 
 $(U,\E,\inner{~}{~},D,\phi,\vect{0})$ is a $c$-integrable 
 frontal bundle, and then there exists
 the corresponding frontal $\tilde f\colon U\to \widetilde  N^{m+1}(c)$. 
 Since $\psi=\vect{0}$,
 the image of $\tilde f$ lies in a totally geodesic hypersurface of 
 $\widetilde N^{m+1}(c)$.
\end{proof}

\subsection{Applications to surface theory}
Now we introduce applications for surface theory.
To state them, we rewrite the $c$-integrability \eqref{eq:G}
for the $2$-dimensional case.
Let $(M^2,\E,\inner{~}{~},D,\phi,\psi)$ be a frontal bundle
over a $2$-manifold $M^2$.
Take a (local) orthonormal frame field $\{\vect{e}_1,\vect{e}_2\}$
of $\E$, and take a $1$-form $\omega$ as 
\begin{equation}\label{eq:conn-form}
    D\vect{e}_1 = -\omega \vect{e}_2,\qquad
    D\vect{e}_2 = \omega\vect{e}_1,
\end{equation}
that is, $\omega$ is the connection form of $D$ with respect to 
the frame $\{\vect{e}_1,\vect{e}_2\}$.
Then one can easily see that
$(M^2,\E,\inner{~}{~},D,\phi,\psi)$ is $c$-integrable if and only if
\begin{equation}\label{eq:2-integrable}
 d\omega = c\alpha + \beta
\end{equation}
holds, where $\alpha$ and $\beta$ are $2$-forms on $M^2$ defined by
\begin{align*}
   \alpha(X,Y) &= 
   \inner{\phi(X)}{\vect{e}_1}
   \inner{\phi(Y)}{\vect{e}_2}-
   \inner{\phi(X)}{\vect{e}_2}
   \inner{\phi(Y)}{\vect{e}_1},\\
   \beta(X,Y) &= 
   \inner{\psi(X)}{\vect{e}_1}
   \inner{\psi(Y)}{\vect{e}_2}-
   \inner{\psi(X)}{\vect{e}_2}
   \inner{\psi(Y)}{\vect{e}_1}.
\end{align*}
\begin{remark}
\label{rem:d-omega}
 Let $K_\phi$ be the Gaussian curvature of the first fundamental
 form $\first=ds_\phi^2$.
 Then 
 \begin{equation}\label{eq:d-omega}
     d\omega = K_\phi\,d\hat A_\phi
 \end{equation}
 holds, where $d\hat A_\phi$ is the signed $\phi$-volume form
 defined in Definition~\ref{def:volume}.
\end{remark}

\begin{theorem}[Fronts of constant negative extrinsic curvature]
\label{thm:sine-gordon}
 Let $U$ be a simply connected domain of $\R^2$
 and $c\in\R$  a constant.
 Take a smooth real-valued function $\theta=\theta(u,v)$ on $U$
 which satisfies the equation{\rm:}
 \begin{equation}\label{eq:c-sine-gordon}
    \theta_{uv} = (c-1)\sin\theta,
 \end{equation}
 where $\theta_{uv}:=\partial^2\theta/(\partial u\partial v)$.
 Then there exists a front
 $f\colon{} U \to \widetilde N^3(c)$
 whose fundamental forms are given by
 \begin{equation}\label{eq:f-form-neg}
 \begin{alignedat}{2}
   \first &= \inner{\phi}{\phi}&=& du^2 + 2\cos\theta\,du\,dv +dv^2,\\
   \second &=-\inner{\phi}{\psi}&=& 2 \sin\theta\,du\,dv,\\
   \third &=\inner{\psi}{\psi} &=&du^2 - 2\cos\theta\,du\,dv +dv^2.
 \end{alignedat}
 \end{equation}
 In particular, the Gaussian curvature of $f$ is identically $c-1$ on
 $U\setminus \Sigma$, where $\Sigma=\{\theta\equiv 0 \pmod\pi\}$ 
 is the singular set of $f$.
 Conversely, any front 
 $f\colon{} U \to \widetilde N^3(c)$ whose regular set
 $R_f:=U\setminus\Sigma$ is dense in $U$ and whose  Gaussian curvature
 is $c-1$ on $R_f$ is given in this manner.
\end{theorem}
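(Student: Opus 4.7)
The plan is to reduce both halves of the theorem to the realization theorem for frontal bundles (Theorem~\ref{thm:fundamental}) together with the two-dimensional integrability criterion \eqref{eq:2-integrable}. For the existence direction, my goal is to construct an explicit $c$-integrable front bundle on $U$ whose fundamental forms are precisely those in \eqref{eq:f-form-neg}; Theorem~\ref{thm:fundamental} then realizes it as a frontal, and \eqref{eq:front} promotes it to a front.

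Concretely, I would take $\E := U\times\R^2$ with the standard inner product and fix a global orthonormal frame $\{\vect{e}_1,\vect{e}_2\}$. Bundle homomorphisms are defined by the half-angle Chebyshev ansatz
\begin{equation*}
\begin{aligned}
\phi(\partial_u) &= \cos\tfrac{\theta}{2}\,\vect{e}_1+\sin\tfrac{\theta}{2}\,\vect{e}_2, &
\phi(\partial_v) &= \cos\tfrac{\theta}{2}\,\vect{e}_1-\sin\tfrac{\theta}{2}\,\vect{e}_2,\\
\psi(\partial_u) &= -\sin\tfrac{\theta}{2}\,\vect{e}_1+\cos\tfrac{\theta}{2}\,\vect{e}_2, &
\psi(\partial_v) &= -\sin\tfrac{\theta}{2}\,\vect{e}_1-\cos\tfrac{\theta}{2}\,\vect{e}_2.
\end{aligned}
\end{equation*}
A direct computation confirms that $\first$, $\second$, $\third$ agree with \eqref{eq:f-form-neg} and that the compatibility \eqref{eq:compati} holds. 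Imposing the Codazzi equation \eqref{eq:c} on $\phi$ (equivalently on $\psi$) uniquely forces the connection form in \eqref{eq:conn-form} to be $\omega = -\tfrac12\theta_u\,du + \tfrac12\theta_v\,dv$. Computing the three relevant $2$-forms,
\[
d\omega = -\theta_{uv}\,du\wedge dv,\qquad \alpha = -\sin\theta\,du\wedge dv,\qquad \beta = \sin\theta\,du\wedge dv,
\]
the $c$-integrability condition \eqref{eq:2-integrable} becomes $-\theta_{uv} = (1-c)\sin\theta$, which is precisely the hypothesis \eqref{eq:c-sine-gordon}. Theorem~\ref{thm:fundamental} now delivers a frontal $f:U\to\widetilde N^3(c)$; since $\Ker\phi_p$ is non-trivial only where $\sin\theta(p)=0$ and $\Ker\psi_p$ only where $\cos\theta(p)=0$, the two kernels are everywhere disjoint, so \eqref{eq:front} is satisfied and $f$ is a front. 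The $\phi$-singular set is $\{\lambda_\phi = 0\} = \{\sin\theta = 0\} = \Sigma$, and because $d\hat A_\phi = -\sin\theta\,du\wedge dv$, Remark~\ref{rem:d-omega} immediately gives $K_\phi = d\omega/d\hat A_\phi = c-1$ on $U\setminus\Sigma$.

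For the converse, the hypothesis that the Gaussian curvature equals $c-1$ on the dense regular set $R_f$ translates, via the Gauss equation in $\widetilde N^3(c)$, into extrinsic curvature $-1<0$, so $\second$ is Lorentzian on $R_f$ and admits two real asymptotic line fields. I would introduce Chebyshev asymptotic coordinates $(u,v)$ in which $\partial_u$ and $\partial_v$ are unit tangent vectors; then $\first$ takes the form in \eqref{eq:f-form-neg} automatically, for a smooth angle function $\theta$ between the asymptotic directions, and $\second = 2e\,du\,dv$ for some function $e$. The Codazzi equations together with the curvature normalization force $e = \pm\sin\theta$ and determine $\third$ as in \eqref{eq:f-form-neg}, while the Gauss equation collapses to \eqref{eq:c-sine-gordon}. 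The principal obstacle is that Chebyshev coordinates and the angle $\theta$ are \emph{a priori} defined only on $R_f$, whereas the stated identities must hold across $\Sigma$. I would address this by working with the ambient smooth front bundle $(\E_f,\phi_f,\psi_f)$ of Example~\ref{ex:front-bundle}, which is globally smooth on $U$ regardless of singularities of $f$, extracting $\theta$ as the angle between $\phi_f(\partial_u)$ and $\phi_f(\partial_v)$ measured in $\E_f$, and extending the identities across $\Sigma$ by continuity using the density of $R_f$.
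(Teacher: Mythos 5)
Your existence argument follows the paper's construction exactly (your frame is the paper's $\{\vect{a}_1,\vect{a}_2\}$ with $\vect{a}_2$ replaced by $-\vect{a}_2$), but two of the verifications are wrong. First, for $\omega=-\tfrac12\theta_u\,du+\tfrac12\theta_v\,dv$ one has $d\omega=+\theta_{uv}\,du\wedge dv$, not $-\theta_{uv}\,du\wedge dv$; combined with your (correct) $\alpha=-\sin\theta\,du\wedge dv$ and $\beta=\sin\theta\,du\wedge dv$, the integrability condition \eqref{eq:2-integrable} reads $\theta_{uv}=(1-c)\sin\theta$, not $\theta_{uv}=(c-1)\sin\theta$. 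The same conclusion follows from the paper's own data, from the corollary after the theorem (classical sine--Gordon $\theta_{uv}=\sin\theta$ for $c=0$, $K=-1$), and from the intrinsic formula $K=-\theta_{uv}/\sin\theta$ for the metric $du^2+2\cos\theta\,du\,dv+dv^2$; so the sign in \eqref{eq:c-sine-gordon} is evidently a misprint, and your sign error in $d\omega$ happens to reproduce the misprinted equation rather than verify it. Second, your justification of the front condition \eqref{eq:front} is incorrect: $\mu(\psi(\partial_u),\psi(\partial_v))=\sin\theta$, so $\Ker\psi_p$ is nontrivial exactly on $\{\sin\theta=0\}$, the \emph{same} locus as $\Ker\phi_p$, not on $\{\cos\theta=0\}$. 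The condition $\Ker\phi_p\cap\Ker\psi_p=\{0\}$ does hold, but because at a point with $\sin\theta=0$ the two kernels are the distinct lines spanned by $\partial_u\mp\partial_v$ and $\partial_u\pm\partial_v$ respectively; that is the argument you need to supply.

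In the converse direction you correctly identify the obstacle, but the proposed repair does not close it: the problem is not recovering $\theta$ from $\E_f$ but producing a single smooth coordinate system on all of $U$ whose coordinate lines are asymptotic on $R_f$; a density argument can only propagate identities once such coordinates already exist across $\Sigma$. The paper's device, which is the missing idea here, is to use the metric $\first+\third$: since $f$ is a front it is positive definite on all of $U$, and it is flat (on $R_f$ it equals $2(du^2+dv^2)$ in asymptotic Chebyshev coordinates, and flatness extends to $U$ by density). Simple connectivity then yields a global immersion $\Phi\colon U\to\R^2$ with $\first+\third=\Phi^*\bigl(2(du^2+dv^2)\bigr)$; the asymptotic directions are mutually orthogonal and geodesic for $\first+\third$, so after a rotation the $u,v$-lines are asymptotic on $R_f$, and only then does density of $R_f$ extend \eqref{eq:f-form-neg} across $\Sigma$. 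You should also note that the Codazzi equation determines $\omega$ only where $\phi$ is an isomorphism, so in the existence direction one should simply \emph{define} $\omega$ by the stated formula and check \eqref{eq:c} for both $\phi$ and $\psi$ directly, as the paper does.
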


\begin{proof}
 Let  $\E=U\times\R^2$ be the trivial bundle and take 
 the canonical orthonormal frame
 $\{\vect{a}_1,\vect{a}_2\}$.
 Define the bundle homomorphisms $\phi$ and $\psi$
 from $TU$ to $\E$ as 
 \begin{equation}\label{eq:chebyshef-homo}
  \begin{aligned}
   \phi&:=\cos\frac{\theta}2(du+dv)\vect{a}_1-
   \sin\frac{\theta}2(du-dv)\vect{a}_2, \\
   \psi&:=-\sin\frac{\theta}2(du+dv)\vect{a}_1
          -\cos\frac{\theta}2(du-dv)\vect{a}_2. 
  \end{aligned}
 \end{equation}
 Take a connection $D$ of $\E$ as 
 \begin{equation}\label{eq:chebyshef-conn}
    D\vect{a}_1 = -\omega \vect{a}_2,\quad 
    D\vect{a}_2 = \omega \vect{a}_1,\qquad 
    \omega = \frac{1}{2}\!
         \left(\theta_u du-
               \theta_v dv\right).
 \end{equation}
 Then by \eqref{eq:2-integrable} and \eqref{eq:c-sine-gordon},
 $(U,\E,\inner{~}{~},D,\phi,\psi)$ is a $c$-integrable front bundle,
 and hence we have the corresponding front $f$.
 In particular, the fundamental forms of $f$ are
 given by
 \eqref{eq:f-form-neg}.
 Hence
 $(u,v)$ forms an asymptotic Chebyshev net of $f$, and the Gaussian
 curvature
 is $(c-1)$.
 Moreover, $\theta$ is the angle between two asymptotic directions 
 with respect to the first fundamental form.

 Conversely, suppose that 
 $f\colon{} U \to \widetilde N^3(c)$ is a front such that the regular
 set $R_f$ of $f$ is dense in $U$ and $f$ 
 has constant Gaussian curvature $(c-1)$ on $R_f$.
 Then the sum $\first+\third$ of the first and 
 the third fundamental forms is a flat metric.
 Since $U$ is simply connected, there is an
 immersion
 $\Phi:U\to (\R^2;u,v)$
 such that $\first+\third=\Phi^*\bigl(2(du^2+dv^2)\bigr)$.
 The asymptotic lines of $f$ on $R_f$
 are geodesic lines with respect to the metric $\first+\third$, and
 two asymptotic directions are mutually
 orthogonal
 with respect to the metric $\first + \third$.
 Thus by rotating the coordinate system $(u,v)$,
 we may assume that the inverse image of $u,v$-lines
 by $\Phi$ consists on asymptotic lines.
 Then the fundamental forms are given by 
 \eqref{eq:f-form-neg} on $\Phi(R_f)$. 
 Since $R_f$ is a dense set, 
 \eqref{eq:f-form-neg} holds on $\Phi(U)$, which proves the assertion.
\end{proof}

In particular, we have
the following assertion on the realization 
of fronts of constant negative curvature $-1$
in $\R^3$
and flat front in $S^3$, respectively.
\begin{corollary}%[Flat fronts in the $3$-sphere $S^3$]
 Let $U$ be a simply connected domain of $\R^2$, and 
 take a smooth real-valued function $\theta$ on $U$
 which satisfies
 \begin{equation}\label{eq:sine-gordon}
  \theta_{uv}=\sin\theta\qquad
   (\text{resp.\ }\theta_{uv}=0).
 \end{equation}
 Then there exists a front
 $f\colon{} U \to \R^3$ {\rm(}resp.\ $S^3${\rm)}
 such that the Gaussian curvature of $f$ is identically $-1$ 
 {\rm(}resp.\ $0${\rm)} on
 $U\setminus \Sigma$, where $\Sigma=\{\theta\equiv 0 \pmod\pi\}$ 
 is the singular set.
\end{corollary}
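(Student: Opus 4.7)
The plan is to deduce the corollary as an immediate specialization of Theorem~\ref{thm:sine-gordon}, choosing the target space form to be $\widetilde N^3(1)=S^3$ in the flat-front case and $\widetilde N^3(0)=\R^3$ in the pseudospherical case. The statement that the singular set equals $\{\theta\equiv 0\pmod\pi\}$ already follows from Theorem~\ref{thm:sine-gordon} once a realization is produced, so the only task is to check that the hypotheses of that theorem hold after the appropriate translation of the sine-Gordon equation.

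For the flat-front assertion in $S^3$, I would set $c=1$ in Theorem~\ref{thm:sine-gordon}. Then $(c-1)\sin\theta\equiv 0$ and the equation~\eqref{eq:c-sine-gordon} reads exactly $\theta_{uv}=0$, matching the second alternative in~\eqref{eq:sine-gordon}. The theorem then produces a front $f\colon U\to S^3$ whose Gaussian curvature is $c-1=0$ on the regular set $U\setminus\Sigma$, as required.

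For the pseudospherical assertion in $\R^3$, I would apply Theorem~\ref{thm:sine-gordon} with $c=0$, in which case~\eqref{eq:c-sine-gordon} becomes $\theta_{uv}=-\sin\theta$. This differs by a sign from the classical sine-Gordon equation $\theta_{uv}=\sin\theta$ stated in the corollary, but the two forms are interchanged by the orientation-preserving diffeomorphism $(u,v)\mapsto(u,-v)$ of $\R^2$: if $\theta$ satisfies $\theta_{uv}=\sin\theta$ on $U$, then $\tilde\theta(u,v):=\theta(u,-v)$ satisfies $\tilde\theta_{uv}=-\sin\tilde\theta$ on the reflected (still simply connected) domain, and conversely. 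Applying Theorem~\ref{thm:sine-gordon} to $\tilde\theta$ with $c=0$ yields a front $\tilde f$ into $\R^3$ of Gaussian curvature $-1$, and pulling back by the coordinate change returns a front $f\colon U\to\R^3$ of the same type. The singular set of $f$ coincides with $\{\sin\theta=0\}=\{\theta\equiv 0\pmod\pi\}$ because the first fundamental form in~\eqref{eq:f-form-neg} has determinant $\sin^2\theta$.

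There is no real obstacle; the only point requiring mild care is the sign convention in the sine-Gordon equation, which is absorbed by the trivial reflection $v\mapsto-v$. Everything else is a direct quotation of Theorem~\ref{thm:sine-gordon}.
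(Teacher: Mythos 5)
Your argument is correct and follows the route the paper intends: the corollary is a direct specialization of Theorem~\ref{thm:sine-gordon} to $c=1$ (for $S^3$, where the sign of the right-hand side of \eqref{eq:c-sine-gordon} is irrelevant) and $c=0$ (for $\R^3$). The only substantive addition is your sign patch in the $\R^3$ case, and it deserves comment. Taking \eqref{eq:c-sine-gordon} at face value, $c=0$ indeed yields $\theta_{uv}=-\sin\theta$, and your substitution $\tilde\theta(u,v):=\theta(u,-v)$ legitimately converts a solution of $\theta_{uv}=\sin\theta$ into a solution of $\tilde\theta_{uv}=-\sin\tilde\theta$; since Gaussian curvature is unchanged under reparametrization, the conclusion follows. (A variant that avoids changing the domain is $\tilde\theta:=\theta+\pi$, which also flips the sign of $\sin\theta$ and preserves the set $\{\theta\equiv 0\pmod\pi\}$.) Two caveats: first, $(u,v)\mapsto(u,-v)$ is orientation-\emph{reversing}, not preserving as you wrote — harmless here, but worth stating correctly. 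Second, the discrepancy you are patching is in fact a sign typo in \eqref{eq:c-sine-gordon} rather than in the corollary: with the connection form \eqref{eq:chebyshef-conn} one has $d\omega=-\theta_{uv}\,du\wedge dv$, while the homomorphisms \eqref{eq:chebyshef-homo} give $c\alpha+\beta=(c-1)\sin\theta\,du\wedge dv$, so the integrability condition \eqref{eq:2-integrable} actually reads $\theta_{uv}=(1-c)\sin\theta$; at $c=0$ this is exactly the classical equation \eqref{eq:sine-gordon} of the corollary. So your workaround is valid but, once the sign in the theorem is corrected, unnecessary — the corollary really is an immediate specialization.
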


\begin{theorem}[Fronts of constant positive curvature]
\label{thm:sinh-gordon}
 Let $U$ be a simply connected domain of $\C=\R^2$, and 
 take a smooth real-valued function $\theta$ on $U$
 which satisfies the sinh-Gordon equation{\em:}
 \begin{equation}\label{eq:sinh-gordon}
     \frac{1}{4}
      \left(
    \theta_{uu}+
       \theta_{vv}
      \right)
      (=\theta_{z\bar z})=-\sinh\theta,
 \end{equation}
 where $z=u+i v$ is the complex coordinate on $\C=\R^2$.
 Then there exists a front
 $f\colon{} U \to\R^3$
 without umbilic points, 
 whose fundamental forms are given by
 \begin{equation}\label{eq:f-form-pos}
 \begin{array}{llll}
   \first &= 
    \inner{\phi}{\phi}&=&\,
         dz^2 + 2\cosh\theta\,dz\,d\bar z  +d\bar z^2,\\
   &&=&\,4\left\{
           \cosh^2({\theta}/{2})\,du^2 + \sinh^2({\theta}/{2})\,dv^2
        \right\},\\
   \second &=-\inner{\phi}{\psi}&=&\,4\sinh\theta\,dz\,d\bar z,\\
   &&=&\,4\cosh({\theta}/{2})\sinh({\theta}/{2})\left(du^2+dv^2\right),\\
   \third &= 
    \inner{\psi}{\psi}&=&\, 
         -dz^2 + 2\cosh\theta\,dz\,d\bar z  -d\bar z^2,\\
   &&=&\,4\left\{
           \sinh^2({\theta}/{2})\,du^2 + \cosh^2({\theta}/{2})\,dv^2
        \right\}.
 \end{array}
 \end{equation}
 Conversely, any front 
 $f\colon{} U \to \R^3$ whose regular set
 $R_f=U\setminus\Sigma$ is dense in $U$ and whose  Gaussian curvature
 is $1$ on $R_f$ without umbilic points is given in this manner.
\end{theorem}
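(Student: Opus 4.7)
\textit{Plan.} I would mimic the proof of Theorem~\ref{thm:sine-gordon}: build an explicit $0$-integrable front bundle on $U$ and invoke the realization theorem (Theorem~\ref{thm:fundamental}). Let $\E = U \times \R^2$ be the trivial rank-$2$ bundle with canonical orthonormal frame $\{\vect{a}_1, \vect{a}_2\}$, and define bundle homomorphisms
\begin{equation*}
 \phi := 2\cosh(\theta/2)\, du\,\vect{a}_1 + 2\sinh(\theta/2)\, dv\,\vect{a}_2,
 \qquad
 \psi := -2\sinh(\theta/2)\, du\,\vect{a}_1 - 2\cosh(\theta/2)\, dv\,\vect{a}_2,
\end{equation*}
together with a connection $D$ of the form \eqref{eq:conn-form} with connection $1$-form
\begin{equation*}
  \omega := \tfrac{1}{2}\theta_v\, du - \tfrac{1}{2}\theta_u\, dv.
\end{equation*}
A direct calculation shows that $\inner{\phi}{\phi}$, $-\inner{\phi}{\psi}$, $\inner{\psi}{\psi}$ reproduce $\first$, $\second$, $\third$ in \eqref{eq:f-form-pos}, that the symmetry condition \eqref{eq:compati} holds, and that this $\omega$ is the unique $1$-form for which both $(U,\E,\inner{~}{~},D,\phi)$ and $(U,\E,\inner{~}{~},D,\psi)$ satisfy the Codazzi equation \eqref{eq:c}. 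The no-common-kernel condition \eqref{eq:front} also holds: where $\sinh(\theta/2)=0$ one has $\Ker\phi_p = \R\partial_v$ and $\Ker\psi_p = \R\partial_u$, which meet only in $\{0\}$, while elsewhere $\phi$ and $\psi$ are both injective.

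The crucial step is $0$-integrability, i.e.\ \eqref{eq:2-integrable} with $c = 0$, which reads $d\omega = \beta$. From the explicit $\omega$ one computes $d\omega = -\tfrac{1}{2}(\theta_{uu}+\theta_{vv})\,du\wedge dv$, while from the explicit $\psi$ one computes $\beta = 2\sinh\theta\,du\wedge dv$. Hence the integrability condition is equivalent to $\tfrac{1}{4}(\theta_{uu}+\theta_{vv}) = -\sinh\theta$, which is exactly the sinh-Gordon equation~\eqref{eq:sinh-gordon}. Theorem~\ref{thm:fundamental} with $c=0$ then produces a front $f\colon U\to\R^3$ whose fundamental forms are~\eqref{eq:f-form-pos} by construction; the principal curvatures are $\tanh(\theta/2)$ and $\coth(\theta/2)$, so $K\equiv 1$ on the regular set and $f$ has no umbilic points (these two numbers are never equal for real $\theta$).

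For the converse, a front $f\colon U\to\R^3$ with dense regular set, $K\equiv 1$ on $R_f$, and no umbilic points has globally distinct principal curvatures $\lambda_1,\lambda_2$ with $\lambda_1\lambda_2 = 1$, so one may smoothly set $\lambda_1 = \tanh(\theta/2)$ and $\lambda_2 = \coth(\theta/2)$ for a real-valued function $\theta$ on $R_f$. The principal line fields give local curvature-line coordinates, and the Codazzi equations in such coordinates force $\first$ and $\second$ to take diagonal forms whose coefficients, after a multiplicative rescaling of the coordinates, coincide with the expressions in~\eqref{eq:f-form-pos}; the Gauss equation then becomes \eqref{eq:sinh-gordon}. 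Extension across the singular locus $\{\theta = 0\}$ follows from density of $R_f$ and smoothness of $\theta$.

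The main obstacle I anticipate lies in the converse direction, specifically in arranging that the locally-defined curvature-line coordinates on $R_f$ assemble into the single global coordinate system $(u,v)$ used in~\eqref{eq:f-form-pos}, and that this extends smoothly across the singular locus where the parametrization $\lambda_2 = \coth(\theta/2)$ degenerates. The forward direction, by contrast, reduces to bookkeeping plus the short identification of $d\omega$ with $\beta$.
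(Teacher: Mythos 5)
Your forward direction coincides with the paper's proof: the same $\phi$, $\psi$ and connection form $\omega$ on the trivial bundle, the same reduction of the $0$-integrability condition \eqref{eq:2-integrable} to the sinh-Gordon equation \eqref{eq:sinh-gordon} via $d\omega=\beta$, and the same appeal to Theorem~\ref{thm:fundamental}; your computation of the principal curvatures $\tanh(\theta/2)$, $\coth(\theta/2)$ and the no-umbilic conclusion are correct. That half is complete.

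The gap is in the converse, and it is precisely the obstacle you flag but do not resolve: assembling the local curvature-line coordinates on $R_f$ into a single global chart and extending \eqref{eq:f-form-pos} across the singular locus. Your route through the Codazzi equations in local principal coordinates cannot close this, because those coordinates are built from $\first$, which degenerates on $\Sigma$ (where $\coth(\theta/2)$ blows up), so they neither patch globally nor continue to the singular set. The paper's device is to work instead with $\first-\third$, which in the model equals $2(dz^2+d\bar z^2)=4(du^2-dv^2)$ and which is a flat Lorentzian metric on all of $U$, non-degenerate even on $\Sigma$. Simple connectivity then yields a single developing immersion $\Phi\colon U\to\R^2$ with $\first-\third=\Phi^*\bigl(4(du^2-dv^2)\bigr)$; the curvature lines of $f$ on $R_f$ are geodesics of $\first-\third$ and the two principal directions are mutually orthogonal with respect to it, so after a Lorentzian rotation of $(u,v)$ the coordinate lines are the curvature lines, \eqref{eq:f-form-pos} holds on $\Phi(R_f)$, and density of $R_f$ extends it to $\Phi(U)$. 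Without this (or an equivalent) globally defined non-degenerate flat structure that survives the singular set, your converse remains incomplete.
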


\begin{proof}
 Let  $\E=U\times\R^2$ be the trivial bundle and take 
 the canonical orthonormal frame
 $\{\vect{a}_1,\vect{a}_2\}$.
 Define the bundle homomorphisms $\phi$ and $\psi$ as 
 \begin{equation}\label{eq:complex-chebyshef-homo}
  \begin{aligned}
   \phi:&= 2\left[
             \left(\cosh\frac{\theta}{2}du\right)\vect{a}_1 +
             \left(\sinh\frac{\theta}{2}dv\right)\vect{a}_2
            \right] \\
        &=  \cosh\frac{\theta}{2}(dz+d\bar z)\vect{a}_1 - 
            i\sinh\frac{\theta}{2}(dz-d\bar z)\vect{a}_2, \\
   \psi:&= -2\left[ 
           \left(\sinh\frac{\theta}{2}du\right)\vect{a}_1 +
             \left(\cosh\frac{\theta}{2}dv\right)\vect{a}_2
           \right]
           \\
        &=  -\sinh\frac{\theta}{2}(dz+d\bar z)\vect{a}_1  
            +i\cosh\frac{\theta}{2}(dz-d\bar z)\vect{a}_2,
  \end{aligned}
 \end{equation}
 and define  a connection $D$ on $\E$ by a connection form
 \begin{equation}\label{eq:complex-chebyshef-conn}
  \omega= \frac{1}{2}\left(
              \theta_v du-
              \theta_u dv
             \right)
        = \frac{i}{2}\left(
              \theta_z dz-
              \theta_{\bar z}d\bar z
             \right).
 \end{equation}
 Thus by \eqref{eq:2-integrable}, 
 $(U,\E,\inner{~}{~},D,\phi,\psi)$ is a $0$-integrable front bundle,
 and then we have the corresponding front $f$.
 In particular, the fundamental forms of $f$ are
 given by \eqref{eq:f-form-pos}.
 Hence 
 $(u,v)$ forms a curvature line coordinate system,
 and the Gaussian  curvature is $1$.

 Conversely, suppose that $f\colon{} U \to \R^3$
 is a front such that the regular set $R_f$ of $f$ is dense in $U$
 and $f$ has constant Gaussian curvature $1$ on $R_f$.
 Then $\first-\third$ gives a flat Lorentzian metric.
 Since $U$ is simply connected, there is an
 immersion
 $\Phi:U\to (\R^2;u,v)$
 such that $\first-\third=\Phi^*\bigl(4(du^2-dv^2)\bigr)$.
 The curvature lines of $f$ on $R_f$ are geodesic lines with respect to
 the metric $\first-\third$, and the
 two principal directions are orthogonal with respect to
 $\first-\third$. 
 Thus by Lorentzian rotation of the coordinate system $(u,v)$,
 we may assume that the inverse image of $u,v$-lines
 under $\Phi$ consists of principal curvature lines.
 Then the fundamental forms are given by \eqref{eq:f-form-pos} on
 $\Phi(R_f)$. 
 Since $R_f$ is a dense set, 
 \eqref{eq:f-form-pos} holds on $\Phi(U)$, which proves the assertion.
\end{proof}

\section{%
 A relationship between sectional curvatures and 
 singular principal curvatures
}
\label{sec:bdd}
In this section, we investigate a relationship between
sectional curvatures (cf. \eqref{eq:ext2})
near $A_2$-singular points of hypersurfaces (as wave fronts) and 
their singular principal curvatures.

We fix a front bundle $(M^m,\E,\inner{~}{~},D,\phi,\psi)$
over an $m$-dimensional manifold $M^m$.
\begin{definition}\label{def:ext}
 When $p\in M^m$ is not a singular point of $\phi$, we define
 \begin{equation}\label{eq:ext}
      K^{\ext}(X\wedge Y):=
      \frac{
        \second(X,X) \second(Y,Y)
            -\second(X,Y)^2
      }{
         I(X,X) I(Y,Y)
             -I(X,Y)^2
      }
     \qquad (X,Y\in T_pM^m),
 \end{equation}
 which is called the {\em extrinsic curvature\/} at $p$ 
 with respect to the $X\wedge Y$-plane in $T_pM^m$.
\end{definition}

If a front bundle $(M^m,\E,\inner{~}{~},D,\phi,\psi)$
is induced from a front in $N^{m+1}(c)$,
then it holds that
 \begin{equation}\label{eq:ext2}
      K^{\ext}(X\wedge Y):=K(X\wedge Y)+c
     \qquad (X,Y\in T_pM^m),
 \end{equation}
where $K(X\wedge Y)$ is the sectional curvature
at each $\phi$-regular point $p$ of $M^m$. 
Theorem 0.1 given in the introduction is
a direct consequence of the following assertion:

\begin{theorem} \label{thm:bdd}
 Let  $(M^m,\E,\inner{~}{~},D,\phi,\psi)$ be a front bundle over
 an oriented $m$-manifold $M^m$.
 Take an $A_2$-point $p\in M^m$ of $\phi$.
 Then the following hold{\rm :}
\begin{enumerate} 
 \item\label{item:bdd:1}
  Suppose that $K^{\ext}$ is bounded 
  except on the singular set near $p$. 
  Then $\second(X,Y)=0$ holds for all $X$, $Y\in T_pM^m$.
 \item\label{item:bdd:2}
  If $K^{\ext}$ does not change sign on a neighborhood of $p$
  with the singular set removed,       
  then $K^{\ext}$ is bounded on that neighborhood of $p$ with
   the singular set removed.
 \item\label{item:bdd:3}
  If $K^{\ext}$ is non-negative except on the singular set
  near $p$, then the singular principal curvatures 
  at $p$ are all non-positive.
  Furthermore, if there exists a $C^\infty$ vector field $\tilde\eta$
  defined on a neighborhood $U$ of $p$ and a constant $\delta>0$
  such that the restriction of $\tilde \eta$ 
  on $U\cap\Sigma_{\phi}$ gives a null vector field,
  and $K^{\ext}(X\wedge \tilde \eta)\ge \delta$
  holds on $U\setminus \Sigma_\phi$
  for each $C^\infty$-vector field $X$ on $U$
  satisfying $X\wedge \tilde \eta\ne 0$,
  then the singular principal curvatures are all negative at $p$.
\end{enumerate}
\end{theorem}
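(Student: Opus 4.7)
The plan is to set up adapted local coordinates around the $A_2$-point $p$ and reduce all three assertions to explicit computations with the boundary matrix $\Gamma|_{\Sigma_{\phi}}$, where $\gamma_{ij}:=\inner{\psi(\partial/\partial v_i)}{\vect{e}_j}$. Take coordinates $(u,v_1,\dots,v_{m-1})$ centered at $p$ with $\Sigma_{\phi}\cap U=\{u=0\}$, $\partial/\partial u|_p$ in the null direction, and $\{\partial/\partial v_j|_p\}$ orthonormal in $ds^2_{\phi}|_{T\Sigma_{\phi}}$; extend the orthonormal frame $(\vect{e}_1,\dots,\vect{e}_{m-1},\vect{n})$ of $\E$---with $\vect{e}_j=\phi(\partial/\partial v_j)$ on $\Sigma_{\phi}$ and $\vect{n}$ the conormal---off $\Sigma_{\phi}$ by $D_{\partial/\partial u}$-parallel transport. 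Decompose $\phi(\partial/\partial u)=\sum a_k\vect{e}_k+b\vect{n}$ and $\psi(\partial/\partial u)=\sum p_k\vect{e}_k+q\vect{n}$. The $A_2$-condition forces $b=uB$ with $B(p)\neq 0$; compatibility \eqref{eq:compati} applied to $(\partial/\partial u,\partial/\partial v_k)$ gives $p_k|_{\Sigma_{\phi}}=0$, and the front condition $\Ker\phi_p\cap\Ker\psi_p=\{0\}$ then forces $q(p)\neq 0$.

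For parts \ref{item:bdd:1} and \ref{item:bdd:2}, with $w=\sum d_j\,\partial/\partial v_j$ a direct expansion yields
\[
K^{\ext}\!\left(\tfrac{\partial}{\partial u}\wedge w\right)=\frac{N_w(u,v)}{u\,c_w(u,v)},
\]
where $c_w(p)>0$ (Cauchy--Schwarz is strict because $\phi(\partial/\partial u)/u$ has nonzero $\vect{n}$-component) and $N_w(0,v)=B(0,v)\,q(0,v)\sum_{i,j}d_id_j\gamma_{ij}(0,v)$. In part \ref{item:bdd:1}, boundedness of $K^{\ext}$ forces $N_w(0,0)=0$ for each $w$; since $Bq\neq 0$ at $p$, polarization gives $\Gamma|_p=0$, from which $\second|_p=0$ follows immediately since the remaining entries of $\second|_p$ all involve $\phi(\partial/\partial u)|_{\Sigma_{\phi}}=0$. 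In part \ref{item:bdd:2}, a fixed sign of $K^{\ext}$ likewise forces $N_w(0,v)\equiv 0$ on $\Sigma_{\phi}$ near $p$ (otherwise $N_w/u$ changes sign across $u=0$); then $N_w=u\tilde N_w$ for a smooth $\tilde N_w$ and $K^{\ext}=\tilde N_w/c_w$ extends continuously, hence is bounded. For $2$-planes whose projections to $T\Sigma_{\phi}$ remain $2$-dimensional, the denominator stays bounded away from $0$ at $p$ and boundedness is automatic.

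For part \ref{item:bdd:3}, once $\Gamma|_{\Sigma_\phi}=0$, set $h_{jk}:=\inner{D_{\partial/\partial v_j}\vect{e}_k}{\vect{n}}|_{\Sigma_\phi}$ and $\bar\delta_j:=\inner{\psi(\partial/\partial v_j)}{\vect{n}}|_{\Sigma_\phi}$, and Taylor-expand to obtain
\[
\tilde N_w(0,v)=B(0,v)q(0,v)\sum_{j,k}d_jd_k\Gamma'_{jk}(v)-B(0,v)^2\Big(\sum_j d_j\bar\delta_j(v)\Big)^{\!2},\quad \Gamma':=\partial_u\Gamma|_{\Sigma_\phi}.
\]
The crucial identity $\Gamma'_{jk}=-q\,h_{jk}$ follows by applying the coherent tangent bundle identity \eqref{eq:c} to $\psi$ (so $D_{\partial/\partial u}\psi(\partial/\partial v_j)=D_{\partial/\partial v_j}\psi(\partial/\partial u)$), then using $D_{\partial/\partial u}\vect{e}_k=0=D_{\partial/\partial u}\vect{n}$ and $p_k|_{\Sigma_\phi}=0$. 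Substituting gives $\tilde N_w(0,v)=-Bq^2\sum d_jd_kh_{jk}-B^2(\sum d_j\bar\delta_j)^2$, so $K^{\ext}\ge 0$ extended continuously yields $\sgn(B)\cdot H\le 0$ as a symmetric matrix with $H:=(h_{jk})$. Unwinding \eqref{eq:sing-shape} shows that the matrix of $S_\phi|_p$ in the orthonormal basis $\{\partial/\partial v_j|_p\}$ equals $\sgn(d\lambda_\phi(\eta))\cdot H$, and a direct orientation check shows $\sgn(d\lambda_\phi(\eta))=\sgn(B)$ at $p$; hence all singular principal curvatures at $p$ are $\le 0$. The strict hypothesis $K^{\ext}(X\wedge\tilde\eta)\ge\delta>0$ upgrades the matrix inequality to strict definiteness of $\sgn(B)\cdot H$, giving strictly negative singular principal curvatures.

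The main obstacle is the identity $\partial_u\gamma_{jk}|_{\Sigma_\phi}=-q\,h_{jk}$, which supplies the essential bridge from extrinsic curvature (controlled by $\psi$) to the singular shape operator (built from $D\vect{n}$). Its derivation exploits the CTB identity for \emph{both} $\phi$ and $\psi$---a structural feature of the frontal bundle---together with the parallel-transport normalization of the frame and the consequence $p_k|_{\Sigma_\phi}=0$ of compatibility \eqref{eq:compati}. The remaining orientation/sign bookkeeping (for $\sgn(d\lambda_\phi(\eta))$, the choice of $\eta$, and positivity of the oriented frame) is tedious but decouples from the geometric content.
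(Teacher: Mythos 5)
Your argument is correct and is essentially the paper's own proof: the same adapted coordinates (note you must take $\partial/\partial u$ to be null along all of $\Sigma_\phi\cap U$, as in Lemma~\ref{lem:coord}, not only at $p$, since $b=uB$, $p_k|_{\Sigma_\phi}=0$ and the tangential derivatives of $p_k$ are used along the whole singular set), the same order-of-vanishing analysis of the numerator and denominator of $K^{\ext}(\partial_u\wedge w)$ in the transversal coordinate, and the same use of \eqref{eq:c} and \eqref{eq:compati} to convert the second-order coefficient of the numerator into the singular shape operator. The only genuine difference is the normalization and bookkeeping: the paper flattens the computation at $p$ by a quadratic coordinate change (Lemma~\ref{lem:coord}~\ref{item:coord:4}) and tracks the two scalars $h_1$, $h_2$ for a single plane $\partial_1\wedge\partial_m$, whereas you use a $D_{\partial/\partial u}$-parallel frame and carry the full matrix $\Gamma$; your key identity $\partial_u\Gamma|_{\Sigma_\phi}=-q\,(h_{jk})$ is the frame-wise form of the paper's $\partial_m\inner{\phi_1}{\psi_1}=-\inner{D_1\phi_1}{\psi_m}$ combined with Corollary~\ref{cor:coord2}.
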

When $m=2$, the assertion has been proved in \cite{SUY1}.
The first assertion of \cite[Theorem 5.1]{SUY1}
is essentially same statement as \ref{item:bdd:1}.
We shall prove it for general $m$.
\begin{example}
 Consider a front
 \[
     f\colon{}M^3:=S^{2}\times\R\ni (p,t)\longmapsto 
        \begin{pmatrix}
	 (a+t^2)p , t^3
	\end{pmatrix}\in\R^{4},
 \]
 where $S^2:=\{(x,y,z,0)\in \R^4\,;\,x^2+y^2+z^2=1\}$
 and $a$ is a positive constant.
 The singular set of $f$ is $\Sigma:=S^2\times\{0\}$, 
 which consists of $A_2$-points, and $\partial/\partial t$
 gives the null vector field.
 We set $\tilde\eta=\partial_t$, which is the extended null vector
 field.  One can easily see that this front satisfies the condition
\ref{item:bdd:3} of Theorem \ref{thm:bdd},
and
all principal curvatures are equal to $-1/a$.
\end{example}

First, we choose a coordinate system around an 
$A_2$-singular point:
\begin{lemma}\label{lem:coord}
 Let $(M^m,\E,\inner{~}{~},D,\phi,\psi)$ be a frontal bundle
 over an oriented $m$-manifold $M^m$, and let $p\in M^{m}$ be 
 an $A_2$-singular point of $\phi$.
 We fix $X\in T_p\Sigma_{\phi}\setminus\{0\}$.
 Then there exists a local coordinate system $(u_1,\dots,u_m)$
 of $M^m$ on a neighborhood $U$ of $p$ such that
 \begin{enum}
  \item\label{item:coord:1}
       The $\phi$-singular set $\Sigma_{\phi}$ is parametrized as
       \[
	  \Sigma_{\phi}\cap U = \{(u_1,\dots,u_m)\,;\,u_m=0\}.
       \]
  \item\label{item:coord:2}
       $X=\partial_1$ at $p$.
  \item\label{item:coord:3}
       $\partial_m$
       is a null vector field on $\Sigma_{\phi}\cap U$.
  \item\label{item:coord:4}
       For each $j=1,\dots,m-1$,
       $\inner{\phi_j}{D_m\phi_m}=0$ holds at $p$.	
 \end{enum}
 Here, we denote
       \[
           \partial_j = \frac{\partial}{\partial u_j},\ 
	   \phi_j = \phi(\partial_j),\ 
	   \psi_j = \psi(\partial_j),\ 
           \ \text{and}\quad
           D_j = D_{\partial_j}
           \quad (j=1,\dots,m).
       \]
\end{lemma}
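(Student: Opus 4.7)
The plan is to construct the coordinates in three successive stages, each enforcing one additional condition while preserving those already arranged. Since $p$ is non-degenerate, $d\lambda_\phi|_p\neq 0$, so $\Sigma_\phi$ is locally a smooth hypersurface through $p$; I pick any chart $(v_1,\dots,v_m)$ centered at $p$ in which $\Sigma_\phi=\{v_m=0\}$, and apply a linear change to $v_1,\dots,v_{m-1}$ to arrange $X=\partial/\partial v_1$ at $p$. This yields \ref{item:coord:1} and \ref{item:coord:2}.

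Next, I enforce \ref{item:coord:3} by a flow-box construction. I extend $\eta$ to a smooth vector field $\tilde\eta_0$ on a neighborhood of $p$; the $A_2$ hypothesis ensures $\tilde\eta_0$ is transverse to $\Sigma_\phi$ near $p$, so the map
\[
   (u_1,\dots,u_{m-1},u_m)\longmapsto \Phi_{u_m}^{\tilde\eta_0}\bigl(v_1=u_1,\dots,v_{m-1}=u_{m-1},v_m=0\bigr),
\]
where $\Phi_t^{\tilde\eta_0}$ is the flow of $\tilde\eta_0$, is a local diffeomorphism. In these new coordinates, $\partial/\partial u_m=\tilde\eta_0$, which restricts to $\eta$ on $\Sigma_\phi=\{u_m=0\}$, and the map restricts to the identity on $\Sigma_\phi$, so \ref{item:coord:1} and \ref{item:coord:2} persist.

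Finally, I achieve \ref{item:coord:4} by a second-order perturbation. By \ref{item:coord:3}, $\phi_m$ vanishes along $\Sigma_\phi$, so $v:=D_m\phi_m|_p\in\E_p$ is intrinsically defined, and the $A_2$ condition makes $\phi|_{T_p\Sigma_\phi}$ injective, giving a unique decomposition
\[
   v=\sum_{j=1}^{m-1}a_j\,\phi_j(p)+w,\qquad w\perp \phi(T_p\Sigma_\phi).
\]
I then set $\tilde u_j:=u_j+\frac{a_j}{2}u_m^2$ for $j<m$ and $\tilde u_m:=u_m$. Its Jacobian at $p$ is the identity, it fixes $\Sigma_\phi$ pointwise, and the correction is of order $u_m^2$, so \ref{item:coord:1}--\ref{item:coord:3} carry over. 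A short direct computation using $\phi(\partial/\partial\tilde u_m)=\phi_m-\tilde u_m\sum_{j<m}a_j\phi_j$ shows that $D_m\phi_m|_p$ in the new coordinates equals $v-\sum_j a_j\phi_j(p)=w$, perpendicular to each $\phi_j(p)$, giving \ref{item:coord:4}.

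I do not expect any serious obstacle beyond the bookkeeping of checking that each successive change of variables does not spoil what the previous stages achieved. The two manifestations of the $A_2$ hypothesis---transversality of $\eta$ to $\Sigma_\phi$ and injectivity of $\phi|_{T_p\Sigma_\phi}$---are precisely what enable the constructions in the second and third stages respectively; once these are identified, the scheme is essentially forced.
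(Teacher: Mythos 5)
Your proposal is correct and follows essentially the same route as the paper: after arranging \ref{item:coord:1}--\ref{item:coord:3} (which the paper does without spelling out the flow-box detail you supply), the key step in both arguments is the quadratic shear $\tilde u_j = u_j + \text{const}\cdot u_m^2$, whose coefficients are determined by making $D_m\phi_m(p)$ orthogonal to $\phi(T_p\Sigma_\phi)$ --- your orthogonal decomposition of $D_m\phi_m|_p$ is exactly the paper's linear system \eqref{eq:a-eq} with the Gram matrix $(h_{jk})$, invertible by injectivity of $\phi|_{T_p\Sigma_\phi}$. The computation that the new $D_m\phi_m$ at $p$ becomes $v-\sum_j a_j\phi_j(p)=w$ matches the paper's, up to the harmless factor of $2$ absorbed into your choice of coefficients.
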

\begin{proof}
 Since $p$ is a non-degenerate singular point, the singular set
 $\Sigma_{\phi}$ 
 is a smooth hypersurface on a neighborhood of $p$.
 Moreover, the null vector field is transversal to $\Sigma_\phi$
 because $p$ is an $A_2$-point.
 Then one can choose a coordinate system $(u_1,\dots,u_m)$
 around $p$ such that \ref{item:coord:1}--\ref{item:coord:3}
 hold.

 We take a new coordinate system $(\tilde u_1,\dots,\tilde u_m)$
 as 
 \[
    \left\{
    \begin{array}{ll}
    \tilde u_j &:=u_j + (u_m)^2a_j \qquad (j=1,\dots,m-1),\\
    \tilde u_m &:=u_m,
    \end{array}
    \right.
 \]
 where $a_j$ ($j=1,\dots,m-1$) are constants.
 Then we have
 \[
   \begin{cases}
    \dfrac{\partial}{\partial \tilde u_j} &=
        \dfrac{\partial}{\partial u_j} \qquad (j=1,\dots,m-1), \\[6pt]
    \dfrac{\partial}{\partial \tilde u_m} &=
        -2 u_m\left(
        \displaystyle\sum_{j=1}^{m-1}
        a_j\dfrac{\partial}{\partial u_j}\right)
        + \dfrac{\partial}{\partial u_m},
   \end{cases}
 \]
 and thus
 \[
      \phi\left(\frac{\partial}{\partial \tilde u_m}\right)=
      \phi\left(
      \frac{\partial}{\partial u_m}\right)     
          -2
        u_m\sum_{j=1}^{m-1} a_j\phi\left(\frac{\partial}{\partial u_j}
      \right).
 \]
 Since $\partial/\partial u_m=\partial/\partial \tilde u_m$ at $p$, we
 have that
 \[
     D_{\partial/\partial \tilde u_m}^{}
     \phi\left(\frac{\partial}{\partial \tilde u_m}\right)=
     D_{m}
     \phi_m
     -2 
        \sum_{j=1}^{m-1} a_j\phi_j
 \]
 at $p$.
 If we set $h_{ij}:=\inner{\phi_i}{\phi_j}$, then 
 \ref{item:coord:4} is equivalent to the equations
 \begin{equation}\label{eq:a-eq}
    2\sum_{j=1}^{m-1} a_jh_{jk}=\inner{ D_{m}\phi_m}{\phi_k}
                \qquad (k=1,2,\dots,m-1).
 \end{equation}
 Since $(h_{jk})_{j,k=1,\dots,m-1}$ is a non-singular matrix,
 we can choose $a_1,\dots,a_{m-1}$ so that \eqref{eq:a-eq} holds,
 and $(\tilde u_1,\dots,\tilde u_m)$
 satisfies \ref{item:coord:1}--\ref{item:coord:4}.
\end{proof}
\begin{corollary}\label{cor:coord2}
 Let $(u_1,\dots,u_m)$ be a coordinate system as in
 Lemma~\ref{lem:coord}, 
 and assume $(M^m,\E,\inner{~}{~},D,\phi,\psi)$ a front bundle.
 Then both $D_m\phi_m$ and $\psi_m$ are
 non-zero vectors perpendicular to $\phi_j$ {\rm(}$j=1,\dots,m-1${\rm)}
 at $p$.
 In particular,
 $D_m\phi_m$ is proportional to $\psi_m$ at $p$.
\end{corollary}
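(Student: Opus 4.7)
The plan is to exhibit a one-dimensional subspace of $\E_p$ that contains both $D_m\phi_m$ and $\psi_m$, and then show separately that each of these is non-zero there. Since $\partial_m$ spans the null direction at the $A_2$-point $p$, we have $\phi_m(p)=0$, while $\phi_1,\dots,\phi_{m-1}$ are linearly independent at $p$ because $\phi_p$ has rank $m-1$. Let $H\subset\E_p$ be the hyperplane spanned by $\phi_1,\dots,\phi_{m-1}$ at $p$, and let $H^\perp$ denote its one-dimensional orthogonal complement.

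The perpendicularity assertions are nearly immediate. For $D_m\phi_m$, condition \ref{item:coord:4} of Lemma~\ref{lem:coord} gives $\inner{\phi_j}{D_m\phi_m}=0$ at $p$ for each $j=1,\dots,m-1$, so $D_m\phi_m\in H^\perp$. For $\psi_m$, I would apply the frontal compatibility \eqref{eq:compati} to $X=\partial_j$ and $Y=\partial_m$ to obtain $\inner{\phi_j}{\psi_m}=\inner{\phi_m}{\psi_j}$, which vanishes at $p$ since $\phi_m(p)=0$; hence $\psi_m\in H^\perp$ at $p$ as well.

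For the non-vanishing part, $\psi_m(p)\ne 0$ follows instantly from the front condition \eqref{eq:front}: $\partial_m\in\Ker\phi_p$ but $\Ker\phi_p\cap\Ker\psi_p=\{0\}$. To show $D_m\phi_m(p)\ne 0$, I would use the $A_2$-criterion $\lambda_\phi'(p)\ne 0$ recorded below \eqref{eq:lambda-prime}. Taking $\tilde\eta=\partial_m$ and using that $\mu$ is $D$-parallel (the metric connection $D$ preserves orthonormality and the co-orientation, hence $\mu$), differentiating $\lambda_\phi=\mu(\phi_1,\dots,\phi_m)$ along $\partial_m$ gives
\[
\lambda_\phi'(p)=\sum_{j=1}^{m}\mu\bigl(\phi_1,\dots,D_m\phi_j,\dots,\phi_m\bigr)\Big|_p=\mu\bigl(\phi_1,\dots,\phi_{m-1},D_m\phi_m\bigr)\Big|_p,
\]
since for $j<m$ the final slot contains $\phi_m(p)=0$. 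The non-vanishing of the left-hand side forces $D_m\phi_m(p)\ne 0$.

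Since $H^\perp$ is one-dimensional and both $D_m\phi_m$ and $\psi_m$ are non-zero elements of $H^\perp$ at $p$, they must be proportional there, which is the final assertion. The only point requiring a moment's thought is the parallelism $D\mu=0$, which is the chief ingredient of the $A_2$-calculation; otherwise the argument reduces to direct bookkeeping with the compatibility and front conditions, so I expect no serious obstacle.
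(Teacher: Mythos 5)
Your proof is correct and follows essentially the same route as the paper: perpendicularity of $D_m\phi_m$ from condition \ref{item:coord:4} of Lemma~\ref{lem:coord}, perpendicularity of $\psi_m$ from \eqref{eq:compati} together with $\phi_m(p)=0$, non-vanishing of $\psi_m$ from \eqref{eq:front}, and non-vanishing of $D_m\phi_m$ from the computation $\partial_m\lambda_\phi=\mu(\phi_1,\dots,\phi_{m-1},D_m\phi_m)\ne 0$ at $p$. The only (harmless) cosmetic differences are that you invoke the $A_2$-criterion $\lambda'_\phi(p)\ne 0$ where the paper uses non-degeneracy plus the fact that $\lambda_\phi$ vanishes along $\{u_m=0\}$, and that you make the $D$-parallelism of $\mu$ explicit, which the paper uses tacitly.
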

\begin{proof}
 By \ref{item:coord:3} of Lemma~\ref{lem:coord}, 
 $\phi_m=0$ holds on $\Sigma_{\phi}$.
 Since $p\in\Sigma_\phi$ 
 is a non-degenerate singular point, $d\lambda_\phi(p)\neq 0$.
 Then by \ref{item:coord:1}, it holds that 
 $\partial_m\lambda_\phi(p)\neq 0$:
 \[
       \partial_m\lambda_\phi
       = \partial_m\mu(\phi_1,\dots,\phi_m)
       = \mu(\phi_1,\dots,\phi_{m-1},D_m\phi_m)\neq 0\qquad \text{(at $p$)}.
 \]
 Hence $\{\phi_1,\dots,\phi_{m-1},D_m\phi_m\}$
 is linearly independent at $p$.
 That is, $D_m\phi_m$ is a non-zero vector 
 which is perpendicular to $\{\phi_1,\dots,\phi_{m-1}\}$ at $p$.

 On the other hand, by \eqref{eq:compati}, we have
 \[
    \inner{\phi_j}{\psi_m} = \inner{\psi_j}{\phi_m} = 0
    \qquad (j=1,\dots,m-1)
 \]
 on $\Sigma_\phi$.
 Thus $\psi_m(p)$ is perpendicular to
 $\{\phi_1(p),\dots,\phi_{m-1}(p)\}$,
 that is, proportional to $D_m\phi_m$ at $p$.
 Here, by \eqref{eq:front}, $\phi_m(p)=0$ implies $\psi_m(p)\neq 0$.
 Thus we have the conclusion.
\end{proof}

\begin{proof}[%
 Proof of \ref{item:bdd:1} and \ref{item:bdd:2} of Theorem~\ref{thm:bdd}]
 Let  $(u_1,\dots,u_m)$ be 
 a local coordinate system  on a neighborhood of $p$
 as in Lemma~\ref{lem:coord},
 and  set
 \begin{align*}
  h_1 &:=\inner{\phi_1}{\phi_1}\inner{\phi_m}{\phi_m}-
         \inner{\phi_1}{\phi_m}^2,\\
  h_2 &:=\inner{\phi_1}{\psi_1}\inner{\phi_m}{\psi_m}-
         \inner{\phi_m}{\psi_1}^2
 \end{align*}
 on a neighborhood of $p$.
 Then $K^{\ext}(\partial_1\wedge\partial_m)=h_2/h_1$
 on $U\setminus\Sigma_{\phi}$.
 Since $\phi_m=0$ on the $\phi$-singular set 
 $U\setminus\Sigma_{\phi}=\{u_m=0\}$,
 \[
     h_1 =0,\qquad
     \frac{\partial h_1}{\partial u_m}=0,\qquad
     h_2=0,
 \]
 whenever $u_m=0$.
 Then there exist smooth functions $\tilde h_1$ and $\tilde h_2$ 
 on a neighborhood of $p$ such that
 \[
      h_1=(u_m) ^2 \tilde h_1 \qquad\text{and}\qquad h_2 = u_m \tilde h_2.
 \]
 Since $\phi_m=0$ on $\{u_m=0\}$, and
 since  $\{\phi_1,D_m\phi_m\}$  are linearly independent,
 as seen in the proof of Corollary~\ref{cor:coord2},
 \begin{align*}
    \tilde h_1|_{u_m=0}
    &=
    \frac{1}{2}
    \left.\frac{\partial^2}{\partial u_m{}^2}\right|_{u_m=0} h_1\\
    &=
    \frac{1}{2}\left(
     \inner{\phi_1}{\phi_1}\inner{D_m\phi_m}{D_m\phi_m}
    -\inner{\phi_1}{D_m\phi_m}^2\right)\\
    &=\frac{1}{2}|\phi_1\wedge D_m\phi_m|^2 >0
 \end{align*} 
 holds on the singular set near $p$.
 On the other hand, we have
 \[
    \tilde h_2|_{u_m=0}
    =
    \left.\frac{\partial}{\partial u_m}\right|_{u_m=0} 
    h_2=
    \inner{\phi_1}{\psi_1}\inner{D_m\phi_m}{\psi_m}.
 \]
 We assume that $K^{\ext}(\partial_1\wedge\partial_m)$ is 
 bounded on $U\setminus\Sigma_{\phi}$.
 Then $h_2/h_1=\tilde h_2/(u_m\tilde h_1)$ is bounded
 on $U\setminus\Sigma_{\phi}$.
 Thus, $\tilde h_2$ must vanish on the singular set near $p$.
 Here, 
 $\inner{D_m\phi_m}{\psi_m}\neq 0$ holds on a neighborhood of $p$
 because of  Corollary~\ref{cor:coord2}.
 Thus we have
 $\inner{\phi_1}{\psi_1}=-\second(X,X)=0$
 on a singular set near $p$.
 Here, since $\second(\partial_m,\partial_m)=-\inner{\phi_m}{\psi_m}=0$ 
 and $X$ is an arbitrary vector on $T_p\Sigma_{\phi}$,
 $\second(Y,Y)=0$ holds for all $Y\in T_pM^m$.
 Since $\second$ is a symmetric $2$-tensor, we have 
 \ref{item:bdd:1}.

 On the other hand, if $K^{\ext}$ is unbounded on
 $U\setminus\Sigma_{\phi}$, 
 the function $\tilde h_2$ does not vanish at $p$.
 Then 
 $K^{\ext}(\partial_1\wedge\partial_m) =  (1/u_m)(\tilde h_2/\tilde h_1)$
 changes sign at $\Sigma_{\phi}$.
 This implies \ref{item:bdd:2}.
\end{proof}
\begin{proof}[Proof of \ref{item:bdd:3} of Theorem~\ref{thm:bdd}]
 We use the same notations as in the proof of the first part.
 Then it holds that 
 \begin{align*}
  \left.\frac{\partial \tilde h_2}{\partial u_m}\right|_{u_m=0} &=
  \left.\frac{\partial^2}{\partial u_m{}^2}\right|_{u_m=0} 
  \left(
    \inner{\phi_1}{\psi_1}\inner{\phi_m}{\psi_m} -
    \inner{\phi_m}{\psi_1}^2
  \right)
  \\
  &=
    (\partial_m \inner{\phi_1}{\psi_1})(\partial_m\inner{\phi_m}{\psi_m})
    -\left(\partial_m\inner{\phi_m}{\psi_1}\right)^2,
 \end{align*}
 because
 $\phi_m=0$ and $\inner{\phi_1}{\psi_m}=\inner{\phi_m}{\psi_m}=0$
 on the singular set.
 Thus, 
 \begin{equation}\label{eq:limit-k-ext}
    \lim_{\scriptsize{
      \begin{array}{l}q\to p\\q\not\in\Sigma_{\phi}\end{array}}}
    K^{\ext}(q)(\partial_1\wedge\partial_m)
    =
    \frac{
    \partial_m \inner{\phi_1}{\psi_1}\partial_m\inner{\phi_m}{\psi_m}
    -\left(\partial_m\inner{\phi_m}{\psi_1}\right)^2
    }{%
       |\phi_1\wedge D_m\phi_m|^2 
    }.
 \end{equation}
 Here, the assumption of the theorem implies that
 the value \eqref{eq:limit-k-ext} is greater than or equal to $\delta$.
 We consider the case that $\delta>0$.
 Then it holds that
 \begin{equation}\label{eq:k-positive}
    (\partial_m
     \inner{\phi_1}{\psi_1})(\partial_m\inner{\phi_m}{\psi_m})> 0
     \qquad\text{at $p$}
 \end{equation}
 because of \eqref{eq:limit-k-ext}.
 (If $\delta=0$, then the left-hand side of \eqref{eq:k-positive}
 is non-negative.)
 Since $\phi_m=0$ and $\inner{\phi_1}{\psi_m}=0$
 on the singular set $\Sigma_{\phi}$,
 we have
 \begin{align*}
    \partial_m\inner{\phi_1}{\psi_1}&=
    \inner{D_m\phi_1}{\psi_1} + \inner{\phi_1}{D_m\psi_1}
    =
    \inner{D_1\phi_m}{\psi_1} + \inner{\phi_1}{D_1\psi_m}\\
  &=
    \partial_1\inner{\phi_m}{\psi_1} -
    \inner{\phi_m}{D_m\psi_1}+ 
    \partial_1\inner{\phi_1}{\psi_m}-
    \inner{D_1\phi_1}{\psi_m}\\
  &=-\inner{D_1\phi_1}{\psi_m}
 \end{align*}
 at $p$.
 Since $D_m\phi_m$ is proportional to  $\psi_m$ by
 Corollary~\ref{cor:coord2},
 this is written as
 \begin{equation}\label{eq:h11m}
  \begin{aligned}
    \partial_m\inner{\phi_1}{\psi_1}&
    =-\inner{D_1\phi_1}{\psi_m}\\
    &=-\frac{\inner{D_1\phi_1}{D_m\phi_m}\inner{D_m\phi_m}{\psi_m}}
           {|D_m\phi_m|^2}
    \qquad\text{at $p$}.
  \end{aligned}
 \end{equation}
 On the other hand, 
 \begin{equation}\label{eq:hmmm}
  \partial_m\inner{\phi_m}{\psi_m}=\inner{D_m\phi_m}{\psi_m}
 \end{equation}
 holds at $p$.
 By \eqref{eq:k-positive}, \eqref{eq:h11m} and \eqref{eq:hmmm},
 we have
 \begin{equation}\label{eq:k-pos2}
    \inner{D_1\phi_1}{D_m\phi_m}<0
 \end{equation}
 at $p$.
 Next, we compute the $\phi$-singular normal curvature
 $\kappa_{\phi}(\partial_1)$ with respect to the direction 
 $\partial_1$ at $p$.
 Let
 \[
    \vect{n}=\frac{\phi_1\wedge\dots\wedge\phi_{m-1}}{%
                   |\phi_1\wedge\dots\wedge\phi_{m-1}|},
 \]
 which is the unit conormal vector field such that
 $\{\phi_1,\dots,\phi_{m-1},\vect{n}\}$ is positively
 oriented.
 Then
 \[
   \kappa_{\phi}(\partial_1)
   = -\epsilon\frac{\inner{D_1\vect{n}}{\phi_1}}{\inner{\phi_1}{\phi_1}}
   = \epsilon\frac{\inner{\vect{n}}{D_1\phi_1}}{\inner{\phi_1}{\phi_1}},
 \]
 where
 \begin{align*}
     \epsilon &= \sgn(\partial_m\lambda_\phi)
       = \sgn\left(\partial_m\mu(\phi_1,\dots,\phi_m)\right)= 
     \sgn \mu(\phi_1,\dots,D_m\phi_m) \\
      &=\sgn\inner{\phi_1\wedge\dots\wedge\phi_{m-1}}{D_m\phi_m}
             =\sgn\inner{\vect{n}}{D_m\phi_m}.
 \end{align*}
 Here, by Corollary~\ref{cor:coord2}, $D_m\phi_m$ is perpendicular
 to $\{\phi_1,\dots,\phi_{m-1}\}$, that is, 
 it is proportional to $\vect{n}$.
 Thus, \eqref{eq:k-pos2} yields
 \[
   \sgn\left(\kappa_{\phi}(\partial_1)\right) = 
   \sgn(\inner{D_m\phi_m}{\vect{n}}\inner{D_1\phi_1}{\vect{n}})=
   \sgn\inner{D_m\phi_m}{D_1\phi_1}< 0
 \]
 at $p$.
 (When $\delta=0$, $\kappa_{\phi}(\partial_1)$
 is  non-positive.)
 Hence we have the conclusion.
\end{proof}
\begin{proof}[Proof of Corollary \ref{cor:hyper}]
 For a front bundle induced by a front in $\R^{m+1}$ 
 (see Example~\ref{ex:front-bundle}),
 the sectional curvature of the singular set
 spanned by two singular principal directions
 is equal to the product of the two singular 
 principal curvatures by the Gauss equation \eqref{eq:G}.
 Thus, we have Corollary \ref{cor:hyper}
 in the introduction.
\end{proof}

\section*{Acknowledgement}
 The authors thank Wayne Rossman
 for careful reading of the first draft for giving valuable comments.

\end{document}